\documentclass[a4paper,10pt]{article}
\usepackage{amsmath}
\usepackage{amssymb}
\usepackage{amsthm}

\usepackage{todonotes}

\usepackage{mathrsfs}
\usepackage{graphicx,subfigure}
\usepackage{paralist} %
\usepackage{enumerate}
\usepackage{hyperref,url}
\usepackage{pgfplots}
\usetikzlibrary{backgrounds}
\usepgfplotslibrary{groupplots}
\usepackage[export]{adjustbox}

\usepackage{bm} %
\usepackage{placeins}
\usepackage{color}
\usepackage[margin=30mm]{geometry}

\usepackage{verbatim}

\usepackage{authblk}

\newcommand{\dx}{\,dx}

\newcommand{\be}{\begin{equation}}
\newcommand{\ee}{\end{equation}}
\newcommand{\been}{\begin{eqnarray*}}
\newcommand{\eeen}{\end{eqnarray*}}

\newcommand{\Omref}{\hat \Omega}

\theoremstyle{plain}
\newtheorem{thm}{Theorem}[section]

\newtheorem{lemma}[thm]{Lemma}

\theoremstyle{plain}

\newtheorem{defn}{Definition}[section]

\newtheorem{alg}{Algorithm}[section]
\numberwithin{equation}{section}

\providecommand{\keywords}[1]{\textbf{\textit{Keywords:}} #1}
\providecommand{\subjclass}[1]{\textbf{\textit{MSC subject classification:}} #1}

\usepackage{adjustbox}

\newcommand{\R}{\mathbb{R}}

\DeclareMathOperator{\Div}{div}

\DeclareMathOperator*{\argmin}{arg\,min}

\DeclareMathOperator{\id}{id}

\begin{document}

\title{Global convergence of $W^{1,\infty}$-steepest descent for PDE constrained shape optimisation with semilinear elliptic equations in function space}
\author[1]{Klaus Deckelnick}
\affil[1]{Otto-von-Guericke-University Magdeburg, Department of Mathematics, Universit\"atsplatz 2, 39106 Magdeburg}
\author[2]{Philip J.~Herbert}
\affil[2]{Department of Mathematics, University of Sussex, Brighton, BN1 9RF, United Kingdom}
\author[3]{Michael Hinze%
}
\affil[3]{Mathematical Institute, University of Koblenz, Universit\"atsstr. 1, D-56070 Koblenz}
\date{\today}

\maketitle
\begin{abstract}
We prove global convergence in function space for the steepest descent
method in shape optimisation with semilinear elliptic partial
differential equations. Steepest descent is realized in the Lipschitz
topology. In addition, we prove a conditional convergence result for the
resulting shapes in two space dimensions.
\end{abstract}

\noindent\keywords{PDE constrained shape optimisation, 
$W^{1,\infty}$-steepest-descent, global convergence, semilinear elliptic PDE} \\[2mm]
\noindent\subjclass{ 35Q93, 49Q10, 49J20}
\\
\renewcommand{\thefootnote}{\arabic{footnote}}

\section{Introduction}\label{intro}
In this article, we continue our investigations on the convergence of the steepest descent method for shape optimisation problems, which we began in \cite{DHH25-convergence}. To this end, we consider the general PDE constrained shape optimisation problem 

\begin{equation}\label{prob0}
    \min \mathcal{J}(\Omega):= \int_\Omega j(x, u(x),\nabla u(x))  \dx,\, \Omega \in \mathcal{S},
\end{equation}
where $j$ is a real-valued function whose properties will be specified in Section \ref{sec:Preliminaries}, and $u$ now weakly solves the semilinear elliptic boundary value problem
\[
-\Delta u + g(u) =f \text{ in } \Omega, \quad u=0 \text{ on } \partial \Omega.
\]
Furthermore, $\mathcal{S}$ is a collection of admissible domains contained in a given hold-all domain $D \subset \mathbb R^d$ for $d=2,3$. As it is in general hard to calculate a minimizer of \eqref{prob0} one frequently uses
a descent approach in order to produce a sequence $(\Omega^k)_{k \in \mathbb N_0} \subset \mathcal S$
which, under appropriate conditions, converges to a stationary shape $\Omega$ satisfying $\mathcal J'(\Omega)=0$. We shall use the method of mappings and construct the sets $\Omega^k \in \mathcal S$ 
as $\Omega^k=\Phi^k(\Omref)$, where $\Phi^k\colon \bar D \rightarrow \bar D$ is a bi--Lipschitz mapping  and $\Omref \Subset D$ is a fixed reference domain. Under suitable conditions on $j$ the shape derivative
$\mathcal J'(\Omega^k)$ is a bounded linear functional on $W^{1,\infty}_0(D,\mathbb R^d)$ so that
a natural choice of descent direction is given by 
\begin{equation}\label{eq:generalMinimisationForDirection}
    V^k \in \argmin \left\{  \mathcal{J}'(\Omega^k)[V] :  V\in  W^{1,\infty}_0(D,\mathbb R^d), | D V | \leq 1 \mbox{ a.e. in } D \right\}.
\end{equation}
Here, $|DV|$ denotes the spectral norm of the Jacobian $DV$. As a result, $\mathcal{J}'(\Omega^k)[V^k]<0$ unless $\Omega^k$ is already stationary. 
The new domain $\Omega^{k+1}$ is then obtained as $\Omega^{k+1} = \Phi^{k+1}(\Omref)$, where $\Phi^{k+1} = ({\rm id} + t_k V^k)\circ \Phi^k$ and $t_k>0$ is chosen via the Armijo step size rule, see Algorithm \ref{alg:Steepest}
for a more detailed description. 
Our aim in this paper is to investigate the convergence of this algorithm within a function
space setting. As our main results we shall prove that
\begin{itemize}
\item an Armijo stepsize for $W^{1,\infty}$-steepest descent directions of the shape functional exists, see Lemma \ref{decrease};
\item the algorithm is globally convergent in the sense that $\Vert \mathcal J'(\Omega^k) \Vert \rightarrow 0$ as $k \rightarrow \infty$, see Theorem \ref{conv1};
\item in two space dimensions the sequence $(\Omega^k)_{k \in \mathbb N_0}$ has a subsequence
that converges with respect to the Hausdorff complementary metric to a stationary point of $\mathcal J$,
provided that $(\Phi^k)_{k \in \mathbb N_0}$ is bounded in $W^{1,\infty}(D,\mathbb R^2)$, see Theorem \ref{conv2}. 
\end{itemize}
These results build on and extend the analysis in \cite{DHH25-convergence}, where
shape optimisation subject to the linear Poisson problem is considered and the convergence for a finite element approximation of problem \eqref{prob0} is examined.  Theorem 3.3 in \cite{DHH25-convergence} proves global convergence of the steepest descent method applied to the discretized problem, and, under an additional condition, the existence of a  discrete stationary shape.  A crucial ingredient in the corresponding proof   is \cite[Lemma 3.2]{DHH25-convergence}, which guarantees the existence of the Armijo stepsize in the steepest descent algorithm. A close inspection of that result shows that it is independent of the finite element discretization parameter and therefore virtually carries over to our infinite-dimensional setting. We shall exploit this observation and  extend it to the case of a semilinear state equation in order
to prove that our steepest descent method is globally convergent, see 
Theorem \ref{conv1} below. \\

Of course  it is also interesting to examine under which conditions
the sequence $(\Omega^k)_{k \in \mathbb N_0}$ converges to a stationary shape $\Omega$. A related result
establishing the convergence of a sequence of discrete stationary shapes to a stationary shape of
$\mathcal J$ was obtained  in \cite[Theorem 4.7]{DHH25-convergence} under the assumption that the mappings $(\Phi^k)_{k \in \mathbb N_0}$ satisfy
\begin{equation} \label{eq:assump}
 \frac{1}{M} |x_1 - x_2 | \leq | \Phi^k(x_1) - \Phi^k(x_2) | \leq M | x_1 - x_2 | \quad \mbox{ for all } x_1,x_2 \in D \mbox{ and all } k \in \mathbb N_0
\end{equation}
for some $M>0$. We shall be able to weaken this condition in the case of two space dimensions in that we only
assume the second inequality in \eqref{eq:assump}, see Theorem \ref{conv2}. 
The reason for the  restriction  to two dimensions is that we make strong use
of two important results due to {\v{S}}ver{\'a}k, \cite{vsverak1993optimal} and Chambolle \& Doveri, \cite{ChamDov97} respectively.  These results establish $\gamma$--convergence for the Dirichlet and Neumann problems and are crucial for passing to the limit
in the state and adjoint equations as well as  in the expression for $\mathcal J'(\Omega)$. An important
advantage of these results is that they essentially require topological conditions on the 
sequence $(\Omega^k)_{k \in \mathbb N_0}$  which in our case follow from the representation $\Omega^k=\Phi^k(\Omref)$ together
with the fact that the mappings  $\Phi^k$ are bi-Lipschitz. \\
 For additional information on the subject of shape optimisation we refer the reader to the seminal works of Delfour and Zol\'esio \cite{DelZol11},  of Sokolowski and Zol\'esio \cite{SZ92}, and the recent overview article \cite{ADJ21} by Allaire, Dapogny, and Jouve, where also a comprehensive  bibliography on the topic can be found.

\section{Preliminaries}\label{sec:Preliminaries}
\subsection{Setting of the problem}

\noindent
Let $D \subset \mathbb R^d$ be an open, convex, and bounded hold-all domain with a Lipschitz boundary and 
\begin{displaymath}
\mathcal S:= \lbrace \Omega \subset D \, | \, \Omega \mbox{ is open } \rbrace.
\end{displaymath}
\noindent
We consider the shape optimisation problem
\begin{displaymath}
\min_{\Omega \in \mathcal S} \mathcal J(\Omega) = \int_{\Omega} j(\cdot,u,\nabla u)  \dx,
\end{displaymath}
where $u \in H^1_0(\Omega)\cap L^\infty(\Omega)$ is the unique solution of 
\begin{equation} \label{state}
\int_{\Omega} \nabla u \cdot \nabla \eta  \dx + \int_{\Omega} g(u) \eta \dx  = \int_{\Omega }f \eta \dx  \qquad \mbox{ for all } \eta \in H^1_0(\Omega).
\end{equation}
Here  '$\cdot$' denotes the Euclidean inner product of two vectors. 
In what follows we assume that $f \in H^1(D)$, $g \in C^2(\mathbb{R})$ with $g'(t) \ge 0$ for all $t \in \mathbb R$ and that $j \in C^2(D \times \mathbb R \times \mathbb R^d)$ satisfies 
\begin{eqnarray}
| j(x,u,z) | + | j_x(x,u,z)| + | j_{xx}(x,u,z)|  & \leq  & \varphi_1(x)+ c_1 \bigl( | u|^q + |z|^2 \bigr); \label{jest} \\
| j_u(x,u,z) | + | j_{xu}(x,u,z) | & \leq  &  \varphi_2(x) + c_2 \bigl(  |u |^{q-1} + | z|^{2-\frac{2}{q}} \bigr);   \label{j1est} \\
| j_z(x,u,z) | + | j_{xz}(x,u,z) |  & \leq   & \varphi_3(x) + c_3 \bigl( |u|^{\frac{q}{2}} + |z| \bigr);  \label{j2est}   \\
| j_{uu}(x,u,z) | & \leq  &\varphi_4(x) + c_4 \bigl( |u|^{q-2} + |z|^{2- \frac{4}{q}} \bigr); \label{j3est} \\
  | j_{zz}(x,u,z) | & \leq & \varphi_5(x) \label{j4est}
\end{eqnarray}
for all $(x,u,z) \in  D \times  \mathbb R \times \mathbb R^d$, {where a subscript $x$ denotes the derivative with respect to the first variable, $u$, the second, and $z$, the third}.
Here, $2 \leq q < \infty$ if $d=2$, and $q= 6$ if $d =3$. Also, $\varphi_1,\ldots,\varphi_5$ are non-negative with 
$\varphi_1 \in L^1(D), \varphi_2 \in L^{\frac{q}{q-1}}(D), \varphi_3 \in L^2(D), \varphi_4 \in L^{\frac{q}{q-2}}(D)$ and 
$\varphi_5 \in L^\infty(D)$.
{Note that the choice of $q$ implies the continuous embedding $H^1_0(D) \hookrightarrow L^q(D)$, so that there exists $c>0$ with
\begin{equation} \label{Dembed}
\Vert v \Vert_{L^q} \leq c \Vert v \Vert_{H^1} \qquad \mbox{ for all } v \in H^1_0(D).
\end{equation} } 
\noindent
From here onwards we omit the explicit dependencies on the spatial variable $x$ wherever appropriate. 
Using the techniques described in Sections 4.4 and 4.5 of \cite{ADJ21}, one calculates for the shape  derivative of $\mathcal J$
\begin{eqnarray}
 \mathcal J'(\Omega)[V] 
 & = &  \int_{\Omega} \Bigl( j(\cdot,u,\nabla u) \Div V + j_x(\cdot,u,\nabla u)\cdot  V  - j_z(\cdot,u,\nabla u) \cdot DV^{\mathsf{T}} \nabla u \Bigr)  \dx    \label{firstvar} \\
 & & + \int_{\Omega} \Bigl(  \bigl( DV + D V^{\mathsf{T}} - \Div V I \bigr)  \nabla u \cdot \nabla p - g(u)p \mbox{ div }V + {\Div}( f V) p   \Bigr)  \dx \nonumber
\end{eqnarray}
for all $V \in W^{1,\infty}_0(D, \mathbb R^d)$. Here,
$p \in H^1_0(\Omega)$ is the solution of the adjoint problem
\begin{equation} \label{adj}
\int_{\Omega} \nabla p \cdot \nabla \eta  \dx + \int_{\Omega} g'(u) p \eta \dx = \int_{\Omega} \bigl( j_u(\cdot,u,\nabla u) \eta + j_z(\cdot,u,\nabla u) \cdot \nabla \eta \bigr)  \dx \quad \mbox{ for all } \eta \in H^1_0(\Omega).
\end{equation}
Note that the assumptions \eqref{jest}--\eqref{j2est} ensure that
all integrals that appear on the right hand side of \eqref{firstvar} and \eqref{adj} exist. At the same time these growth conditions
also guarantee uniform control on the solutions of the state
and adjoint equations as shown in 
\begin{lemma} \label{lem:apriori} Let $\Omega \in \mathcal S$. Then the problems \eqref{state} and \eqref{adj} admit unique solutions $u\in H^1_0(\Omega)\cap L^\infty(\Omega)$, and $p \in H^1_0(\Omega)$, respectively. Moreover, there exists a constant $c^\star$, such that
\begin{equation} \label{discapriori}
\Vert u \Vert_{H^1} + \Vert u\Vert_{L^\infty} + \Vert p \Vert_{H^1} \leq c^\star.
\end{equation}
The constant $c^*$ only depends on $d,D, f,j$ and $g$, but
is independent of $\Omega$. Furthermore, we think of $u$ and $p$ as being extended by zero to $D$.
\end{lemma}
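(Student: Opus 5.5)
Existence and uniqueness of $u$ will come from monotone operator theory, the uniform $L^\infty$ bound from a Stampacchia-type argument on the hold-all domain $D$, and the adjoint estimate from Lax--Milgram together with the growth conditions \eqref{j1est}, \eqref{j2est}.

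\emph{State equation.} The nonlinearity $g$ is only $C^2$ with $g'\ge 0$ and may grow arbitrarily fast, so I first truncate it. Set $M:=$ a bound to be fixed below and define $g_M(t):=g(\max(-M,\min(M,t)))$. Then $g_M$ is bounded, continuous and nondecreasing, and the operator $A_M u := -\Delta u + g_M(u)$ from $H^1_0(\Omega)$ to $H^{-1}(\Omega)$ is monotone, hemicontinuous and coercive. By Minty--Browder there is a unique solution $u_M$. Testing with $u_M$ and using $(g_M(u)-g_M(0))u\ge 0$ gives
\[
\|\nabla u_M\|_{L^2}\le c\bigl(\|f\|_{L^2(D)}+|g(0)|\,|D|^{1/2}\bigr),
\]
with Poincar\'e constant of $D$, because $H^1_0(\Omega)\subset H^1_0(D)$.

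For the $L^\infty$ bound, I test with $(u_M-k)^+$ for $k\ge 0$. Since $g_M(u)\ge g_M(0)=g(0)$ where $u\ge k\ge0$, we get
\[
\int_\Omega |\nabla (u_M-k)^+|^2\le \int_{\{u_M>k\}} (|f|+|g(0)|)(u_M-k)^+ .
\]
Here $f\in H^1(D)\subset L^r(D)$ with $r>d/2$ ($r=6$ for $d=3$, any $r<\infty$ for $d=2$). Stampacchia's lemma applied in $H^1_0(D)$ then yields $\|u_M^+\|_{L^\infty}\le C(d,D,f,g(0))$, and symmetrically for $u_M^-$. Choosing $M$ larger than this $C$ makes $g_M(u_M)=g(u_M)$, so $u:=u_M$ solves \eqref{state}. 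Uniqueness follows from monotonicity: subtracting two solutions and testing with their difference gives $\|\nabla(u_1-u_2)\|^2\le 0$. I expect this $L^\infty$ step to be the main obstacle. The key point is that Stampacchia's argument only uses the Sobolev inequality on $D$ and the measure $|D|$, so the constant does not depend on $\Omega$.

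\emph{Adjoint.} The bilinear form $a(p,\eta)=\int\nabla p\cdot\nabla\eta+g'(u)p\eta$ is bounded on $H^1_0(\Omega)$, since $0\le g'(u)\le\max_{|t|\le C}|g'(t)|$. It is coercive with Poincar\'e constant of $D$, because $g'\ge 0$. The right-hand side is a bounded functional. By \eqref{j1est} and H\"older,
\[
\int|j_u|\,|\eta|\le \bigl(\|\varphi_2\|_{L^{q/(q-1)}}+c_2\|u\|_{L^q}^{q-1}+c_2\|\nabla u\|_{L^2}^{2-2/q}\,|D|^{\cdot}\bigr)\|\eta\|_{L^q},
\]
where the last term uses that $|z|^{2-2/q}\in L^{q/(q-1)}$ when $z\in L^2$. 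Then \eqref{Dembed} gives control by $\|\eta\|_{H^1}$. Similarly, \eqref{j2est} shows $j_z(\cdot,u,\nabla u)\in L^2$, with norm bounded by $\|\varphi_3\|_{L^2}+c_3(\|u\|_{L^q}^{q/2}+\|\nabla u\|_{L^2})$. Lax--Milgram gives a unique $p$. Testing with $p$, the bound on $\|p\|_{H^1}$ depends only on the already uniform bounds for $u$ together with the constants of $D$. Combining the two steps gives \eqref{discapriori}.
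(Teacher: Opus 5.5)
Your proof is correct. The only step where you take a genuinely different route from the paper is the uniform $L^\infty$ bound; the rest matches.

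\textbf{The paper's route.} The paper does not use Stampacchia's iteration. It builds an explicit barrier instead. It solves $-\Delta v = f-g(0)$ in $D$ with $v\in H^2(D)\cap H^1_0(D)$, using convexity of $D$, and bounds $|v|\le K$ via $H^2(D)\hookrightarrow C^0(\bar D)$ for $d\le 3$. Then $w=v+K\ge 0$ is a supersolution, because $g_M(w)\ge g_M(0)=g(0)$. Testing with $(u_M-w)^+\in H^1_0(\Omega)$ gives $u_M\le 2K$, the symmetric argument gives $u_M\ge -2K$, and one takes $M=2K$. This is a short comparison-principle argument. It relies on convexity of the hold-all domain, on $d\le 3$, and on the admissibility of $(u_M-w)^+$ as a test function in $H^1_0(\Omega)$ for an arbitrary open $\Omega$.

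\textbf{Your route.} Truncating at constant levels $k\ge 0$ avoids all of these. You only need the Sobolev inequality in $H^1_0(D)$, the bound $|\{u_M>k\}|\le |D|$, and $f\in L^r(D)$ with $r>d/2$. So your argument works on any bounded hold-all domain and in any dimension, at the price of invoking Stampacchia's iteration lemma. You also correctly note that the resulting constant does not depend on $M$, since only $g_M(0)=g(0)$ enters. Hence fixing $M$ afterwards is not circular.

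\textbf{The rest.} The energy bound from testing with $u$, and the adjoint bound from testing with $p$ using $g'\ge 0$, \eqref{j1est}, \eqref{j2est}, H\"older and \eqref{Dembed}, coincide with the paper. Your explicit Lax--Milgram step for existence and uniqueness of $p$ supplies something the paper leaves implicit. One cosmetic point: no factor $|D|^{\cdot}$ is needed in your H\"older estimate, since $\| |\nabla u|^{2-2/q}\|_{L^{q/(q-1)}}=\|\nabla u\|_{L^2}^{2-2/q}$ exactly.
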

\begin{proof} In order to prove the existence of a solution $u \in H^1_0(\Omega)$ of
\eqref{state} we first consider for $M>0$ the modified nonlinearity
\begin{displaymath}
g_M(t):= \left\{
\begin{array}{cl}
g(-M), & t<-M, \\
g(t), & -M \leq t \leq M, \\
g(M), & t>M.
\end{array}
\right.
\end{displaymath}
Note that $g_M$ is increasing since $g' \geq 0$. 
Using the theory of monotone operators it can be shown that the problem
\begin{displaymath}
\int_\Omega \nabla u_M \cdot \nabla \eta \dx + \int_\Omega g_M(u_M) \eta \dx =
\int_\Omega f \eta \dx \qquad \forall \eta \in H^1_0(\Omega)
\end{displaymath}
has a unique solution $u_M \in H^1_0(\Omega)$, see \cite[Section 2.6]{VM25} for details.  Let us next denote by
$v \in H^2(D) \cap H^1_0(D)$ the solution to the Poisson problem
\begin{displaymath}
- \Delta v = f - g(0) \; \mbox{ a.e. in }  D, \quad v=0 \mbox{ on }  \partial D.  
\end{displaymath}
Since $H^2(D) \hookrightarrow C^0(\bar D)$ for $d=2,3$ we infer that there exists $K \geq 0$, which only depends on $d,D,f$ and $g$,
such that $| v(x) | \leq K$ for all $x \in \bar D$. The function $w:=v+K$ then satisfies
$w \geq 0$ in $\bar D$ as well as
\begin{displaymath}
- \Delta w + g_M(w) = -\Delta v + g_M(w) = f - g_M(0) + g_M(w) \geq f \quad \mbox{ a.e. in } D
\end{displaymath}
since $g_M$ is increasing and $g_M(0)=g(0)$. If we multiply the above relation by
$\eta \in C^1_0(\Omega), \eta \geq 0$ in  $\Omega$ and use an approximation argument we obtain
\begin{displaymath}
    \int_\Omega \nabla w \cdot \nabla \eta \dx + \int_\Omega g_M(w) \eta \dx \geq
    \int_\Omega f \eta \dx \qquad \forall \eta \in H^1_0(\Omega), \, \eta \geq 0 \mbox{ a.e. in } \Omega.
\end{displaymath}
Hence,
\begin{displaymath}
    \int_\Omega \nabla( u_M-w) \cdot \nabla \eta \dx + \int_\Omega (g_M(u_M)-g_M(w)) \eta \dx \leq
0 \qquad \forall \eta \in H^1_0(\Omega), \eta \geq 0 \mbox{ a.e. in } \Omega.
\end{displaymath}
Since $w \geq 0$ in $D$ we have that  $\eta =(u_M-w)^+ \in H^1_0(\Omega), \eta \geq 0$ and
therefore by the monotonicity of $g_M$
\begin{displaymath}
    \int_\Omega | \nabla(u_M-w)^+ |^2 \dx \leq 0.
\end{displaymath}
Thus, $(u_M - w)^+=0$ so that $u_M \leq w =v+K \leq 2K$ a.e. in $\Omega$. A similar argument
shows that $u_M \geq -2K$ a.e. in $\Omega$, so that 
\begin{equation} \label{eq:umbound}
\Vert  u_M \Vert_{L^\infty}  \leq M
\end{equation}
if we choose $M=2K$. In particular, $g_M(u_M)=g(u_M)$ and $u=u_M$ is a solution of
\eqref{state}, while uniqueness is a consequence of the monotonicity of $g$. While the $L^\infty$--bound for $u$ in \eqref{discapriori} then follows from \eqref{eq:umbound}, the $H^1$--seminorm is controlled
by testing \eqref{state} with $\eta=u$ and using that $g(u)u
\geq g(0)u$ a.e. in $\Omega$. \\
In order to prove the bound on $p$ we test \eqref{adj} with $\eta=p \in H^1_0(\Omega)$ and use 
the fact that $g' \geq 0$, \eqref{j1est}, \eqref{j2est},
H\"older's inequality and  {\eqref{Dembed}} to obtain
\begin{eqnarray*}
\lefteqn{  \int_D | \nabla p |^2  \dx = 
\int_{\Omega} | \nabla p |^2  \dx    } \\
 & \leq & \int_{\Omega} \left[ \bigl( \varphi_2 + c_2( | u |^{q-1} + | \nabla u |^{\frac{2(q-1)}{q}} ) \bigr) | p | + \bigl( \varphi_3 + c_3( | u|^{\frac{q}{2}} + | \nabla u | ) \bigr)  | \nabla p | 
 \right] \dx \\
& \leq & \bigl( \Vert \varphi_2 \Vert_{L^{\frac{q}{q-1}}} + c_2(\Vert u \Vert_{L^q}^{q-1} + \Vert \nabla u \Vert_{L^2}^{\frac{2(q-1)}{q}} ) \bigr)  \Vert p \Vert_{L^q}   \\
& & + \bigl( \Vert \varphi_3 \Vert_{L^2} + c_3( \Vert u \Vert_{L^q}^{\frac{q}{2}} + \Vert \nabla u \Vert_{L^2})  \bigr) \Vert  \nabla p \Vert_{L^2} \\[2mm]
& \leq & c \bigl( 1+ \Vert u \Vert_{H^1}^{q-1}  \bigr)   \Vert p \Vert_{H^1} \leq c \bigl( 1 + (c^*)^{q-1} \bigr) \Vert p \Vert_{H^1} \leq c \bigl( 1 + (c^*)^{q-1} \bigr) \Vert \nabla p \Vert_{L^2},
\end{eqnarray*}
{where we also made use of Poincar\'e's inequality for $D$ and the bound on $u$.} The estimate for 
$\Vert p \Vert_{L^2}$ now follows from another application  of Poincar\'e's inequality.
\end{proof}
\noindent

\subsection{Descent algorithm} 

\noindent
Let us next translate the ideas formulated in the introduction
into a descent method in the space $W^{1,\infty}(D,\mathbb R^d)$.
To do so, let us fix an open, nonempty reference domain $\Omref \Subset D$. The open sets  generated by
our descent method will be constructed via the method of mappings
in the form $\Omega^k = \Phi^k(\Omref)$ with suitable 
bi--Lipschitz mappings  $\Phi^k:\bar D \rightarrow \bar D$.

\begin{alg}[Steepest descent with Armijo line search]\label{alg:Steepest}
Let $\gamma \in (0,1)$ be a fixed constant. \\ [2mm]
0. Let $\Omega^0:= \Omref, \Phi^0:= \id$. \\[2mm]
For k=0,1,2,\ldots: \\[2mm]
1. If $\mathcal J'(\Omega^k)=0$, then stop. \\[2mm]
2. Choose $V^k \in  W^{1,\infty}_0(D, \mathbb R^d)$ such that 
\begin{displaymath}
V^k \in \argmin \lbrace J'(\Omega^k)[W] \, | \, W \in W^{1,\infty}_0(D, \mathbb R^d), \, | DW | \leq 1 \mbox{ a.e. in } D \rbrace.
\end{displaymath}
3. Choose the maximum $t_k \in \lbrace  \frac{1}{2}, \frac{1}{4}, \ldots \rbrace$ such that 
\begin{displaymath}
\mathcal J \bigl( (\id + t_k V^k)(\Omega^k) \bigr) - \mathcal J(\Omega^k) \leq \gamma t_k \mathcal J'(\Omega^k)[V^k].
\end{displaymath}
4. Set $\Phi^{k+1}:= (\id + t_k V^k )\circ \Phi^k, \, \Omega^{k+1}:= (\id+ t_k V^k)(\Omega^k)$.
\end{alg}

\vspace{2mm}
\noindent
In the following lemma we collect the properties of the mappings $\Phi^k$ generated by Algorithm \ref{alg:Steepest}.

\begin{lemma} \label{lem:propalg}
Algorithm \ref{alg:Steepest} produces a sequence of bi--Lipschitz mappings $\Phi^k: \bar D \rightarrow \bar D$ with
$\Phi^k=\id$ on $\partial D$ and open sets $\Omega^k \subset D$ with $\Omega^k= \Phi^k(\Omref)$.
\end{lemma}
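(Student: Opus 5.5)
The plan is induction on $k$. The key fact is that each update map $\id + t_k V^k$ is a bi--Lipschitz homeomorphism of $\bar D$ onto itself that fixes $\partial D$. For $k=0$ everything is trivial, since $\Phi^0=\id$ and $\Omega^0=\Omref$ is open. Suppose $\Phi^k$ is a bi--Lipschitz map of $\bar D$ onto $\bar D$ with $\Phi^k=\id$ on $\partial D$, and $\Omega^k=\Phi^k(\Omref)$ is open. Then, once we show that $T_k:=\id+t_kV^k$ is a bi--Lipschitz bijection of $\bar D$ fixing $\partial D$, the composition $\Phi^{k+1}=T_k\circ\Phi^k$ is again bi--Lipschitz with $\Phi^{k+1}=\id$ on $\partial D$. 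Moreover,
\[
\Omega^{k+1}=T_k(\Omega^k)=\Phi^{k+1}(\Omref),
\]
and this set is open because $T_k$ is a homeomorphism. It lies in $D$ because $T_k(D)=D$. (If the algorithm stops at step 1 the sequence is finite, and the statement is unaffected.)

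First I would record the Lipschitz bound. Since $D$ is convex and $|DV^k|\le 1$ a.e., we have $|V^k(x)-V^k(y)|\le |x-y|$ for $x,y\in\bar D$, after using the continuous (Lipschitz) representative of $V^k$ and integrating along segments. Because $t_k\le \tfrac12$,
\[
\tfrac12|x-y|\le |T_k(x)-T_k(y)|\le \tfrac32|x-y| .
\]
So $T_k$ is injective with Lipschitz inverse on its image. Also, $V^k\in W^{1,\infty}_0(D,\mathbb R^d)$ vanishes on $\partial D$, so $T_k=\id$ on $\partial D$.

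The main step is surjectivity, $T_k(\bar D)=\bar D$. The cleanest route is to extend $V^k$ by zero to all of $\mathbb R^d$. I would check that the extension is still $1$-Lipschitz. For $x\in D$ and $y\notin D$, take the point $z\in\partial D$ on the segment $[x,y]$; then $|V^k(x)-V^k(y)|=|V^k(x)-V^k(z)|\le |x-z|\le|x-y|$.

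Then $\tilde T=\id+t_k\tilde V$ is a bijection of $\mathbb R^d$. Indeed, for fixed $y$ the map $x\mapsto y-t_k\tilde V(x)$ is a contraction with constant $\tfrac12$, so Banach's fixed point theorem applies. Moreover $\tilde T=\id$ outside $D$. By injectivity, $\tilde T$ therefore maps $\mathbb R^d\setminus D$ onto itself, and hence $D$ onto $D$. It also fixes $\partial D$ pointwise, so $T_k(\bar D)=\bar D$ and $T_k(D)=D$.

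Finally, $T_k^{-1}$ is $2$-Lipschitz by the lower bound above, so $T_k$ is a homeomorphism of $\bar D$. I expect the surjectivity step to be the only real obstacle, and the zero extension plus the contraction argument handles it.
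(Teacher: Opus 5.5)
Your proof is correct, but you obtain surjectivity by a different mechanism than the paper.

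\textbf{The paper's route.} The paper argues directly on the composite $\Phi^{k+1}$. Since $\Phi^{k+1}=\id$ on $\partial D$, the Brouwer degree gives $\deg(\Phi^{k+1},D,p)=\deg(\id,D,p)=1$ for every $p\in D$, hence $D\subset\Phi^{k+1}(D)$. It then shows that $D$ is relatively closed in $\Phi^{k+1}(D)$, using injectivity on $\bar D$ together with the boundary values. Since $D$ is also open there and $\Phi^{k+1}(D)$ is connected, it concludes $\Phi^{k+1}(D)=D$.

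\textbf{Your route.} You prove surjectivity of each single update $T_k$ and then compose. You extend $V^k$ by zero to a $1$-Lipschitz field on $\mathbb R^d$, so $\id+t_k\tilde V$ is a bijection of $\mathbb R^d$ by Banach's fixed point theorem. Then $\tilde T=\id$ off $D$, combined with injectivity, gives $\tilde T(D)=D$.

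A small wording point: that $\tilde T$ maps $\mathbb R^d\setminus D$ onto itself is trivial. Injectivity is what you need in the next step, to exclude points of $D$ being sent into $\mathbb R^d\setminus D$.

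\textbf{What each approach buys.} Your argument is fully elementary: it avoids degree theory and the clopen/connectedness step. It also yields a bi-Lipschitz homeomorphism of all of $\mathbb R^d$. Its reliance on the contraction bound $t_k\le\frac12$ costs nothing here, because both proofs already need that bound for the lower Lipschitz estimate.

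The paper's degree argument is more structural. Surjectivity there uses only continuity, injectivity on $\bar D$ and $\Phi^{k+1}=\id$ on $\partial D$. So it applies verbatim to the composite map, and to any injective map with these boundary values, whether or not it is a contractive perturbation of the identity.

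Like the paper, you take the existence of the minimiser $V^k$ in step 2 for granted; the paper cites the literature for this.
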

\begin{proof} Let us prove these assertions by induction over $k \in \mathbb N_0$. They are clearly
satisfied for $k=0$ by step 0 of the algorithm.  Let us
next assume that for some $k \in \mathbb N_0$ the mapping $\Phi^k:
\bar D \rightarrow \bar D$ is bi--Lipschitz with $\Phi^k=\id$ on $\partial D$ and $\Omega^k=\Phi^k(\Omref)$. 
Let $V^k \in W^{1,\infty}_0(D,\mathbb R^d)$ be the direction obtained in step 2 of the algorithm (see
Theorem 2.1 and the subsequent remarks in \cite{Deckelnick26122024} for the existence of $V^k$) and $0 \leq t_k \leq \frac{1}{2}$ the step size chosen in step 3.   Since $
 | DV^k | \leq 1$ a.e. in $D$ and $D$ is convex we infer that 
\begin{displaymath}
\frac{1}{2} | x_1 -x_2 | \leq | (\id+t_k V^k)(x_1) - (\id+t_k V^k)(x_2)| \leq \frac{3}{2} | x_1 - x_2 | \qquad \forall x_1,x_2 \in D,
\end{displaymath}
so that $\Phi^{k+1}=(\id+t_k V^k) \circ \Phi^k$ is bi--Lipschitz and in particular injective.
Furthermore, observing that  $V^k=0$ on $\partial D$ we have  that $\Phi^{k+1}=\Phi^k=\mbox{id}$ on $\partial D$. \\
In order to prove that $\Phi^{k+1}$ is surjective we repeat an argument that was
used  in the proof of Lemma 2.1 in \cite{DHH25-convergence}. For
every $p \in D$ we have that
\begin{displaymath}
\mbox{deg}(\Phi^{k+1},D,p) = \mbox{deg}(\mbox{id},D,p)=1,
\end{displaymath}
where $\mbox{deg}$ denotes the Brouwer degree. Therefore we infer from the existence property of the degree that there is $x \in D$ with $p=\Phi^{k+1}(x)$ so
that $D \subset \Phi^{k+1}(D)$. Next we claim that $D$ is closed in $\Phi^{k+1}(D)$. To see this,
let $(p_n)_{n \in \mathbb N}$ be a sequence in $D$ such that $p_n \rightarrow p$ as $n \rightarrow \infty$ for some $p \in \Phi^{k+1}(D)$, say $p=\Phi^{k+1}(x)$ with
$x \in D$. If $p \in \partial D$, then $\Phi^{k+1}(x)=p=\Phi^{k+1}(p)$, which implies in view of the  injectivity of $\Phi^{k+1}$ that $x=p$,
a contradiction. Hence $p \in D$. As $D$ is also open in $\Phi^{k+1}(D)$ and $\Phi^{k+1}(D)$ is connected we infer that $D=\Phi^{k+1}(D)$. Since  $\Phi^{k+1}=\id$ on $\partial D$ we conclude that $\Phi^{k+1}:\bar D \rightarrow \bar D$ is bijective. Therefore, 
$\Phi^{k+1}$ is a bi--Lipschitz map  from $\bar D$ to $\bar D$ with $\Phi^{k+1}=\id$ on $\partial D$. Furthermore,
$\Omega^{k+1}=(\id+t_k V^k)(\Omega^k)=(\id+t_k V^k) \circ \Phi^k(\Omref)=\Phi^{k+1}(\Omref)$. \\
Finally, the sets  $\Omega^{k}=\Phi^k(\Omref)$  are open because $\Phi^k$ is a homeomorphism and
$\Omref$ is open. 
\end{proof}

\section{Convergence of the descent algorithm}\label{sec:convergenceOfAlgorithm}
In this Section, we now demonstrate the global convergence of the algorithm, as well as showing that in two dimensions, the sequence $(\Omega^k)_{k \in \mathbb{N}_0}$ has a convergent subsequence provided that the sequence $(\Phi^k)_{k \in \mathbb{N}_0}$ is bounded in $W^{1,\infty}(D;\R^2)$.

\subsection{Global convergence}
\noindent
Our aim in this section is to  show that the steepest descent Algorithm \ref{alg:Steepest} is globally convergent in the sense that
\begin{displaymath}
\Vert \mathcal J'(\Omega^k) \Vert: = \sup \lbrace \mathcal J'(\Omega^k)[W] \, | \, W \in W^{1,\infty}_0(D, \mathbb R^d), | DW | \leq 1 \mbox{ a.e. in } D \rbrace \rightarrow 0, \quad \mbox{ as }
k \rightarrow \infty.
\end{displaymath}
The corresponding result mainly relies on the following lemma,
which establishes the existence of an Armijo stepsize for a 
given descent direction. Let us note that it is here, where
the growth conditions  on the second partial derivatives of $j$ in \eqref{jest}--\eqref{j4est} are needed. 

\begin{lemma} \label{decrease}
 Let {$\gamma \in (0,1)$ be a fixed constant,} $\Omega \in \mathcal S$, and
\begin{displaymath}
V \in \argmin \lbrace J'(\Omega)[W] \, | \, W \in W^{1,\infty}_0(D, \mathbb R^d), \, | DW | \leq 1 \mbox{ a.e. in } D \rbrace.
\end{displaymath}
Suppose that $\mathcal J'(\Omega)[V] \leq - \epsilon$ for some $\epsilon>0$. Then there exists 
$0< \delta <1$ which only depends on $j, f, g, D, d, \gamma$ and $\epsilon$ such that
\begin{displaymath}
\mathcal J(\Omega_{t}) - \mathcal J(\Omega) \leq \gamma t \mathcal J'(\Omega)[V] \qquad \mbox{ for all } 0 \leq t \leq \delta, \; \mbox{ where } \Omega_{t} = T_t(\Omega) \mbox{ and } T_t=\mbox{id}+ t V.
\end{displaymath}
\end{lemma}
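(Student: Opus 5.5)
The plan is to pull everything back to the fixed domain $\Omega$ through $T_t$, and to show that the map $t \mapsto \mathcal J(\Omega_t)$ is differentiable with derivative $\mathcal J'(\Omega)[V]$ at $t=0$ and Lipschitz-continuous derivative on $[0,\frac12]$, with constants depending only on $j,f,g,D,d$. Set $u^t := u_t \circ T_t \in H^1_0(\Omega)$, where $u_t$ solves \eqref{state} on $\Omega_t$. Write $A(t) := \det(DT_t)\, DT_t^{-1} DT_t^{-\mathsf T}$ and $\xi(t) := \det DT_t$. Then $u^t$ solves
\begin{displaymath}
\int_\Omega A(t)\nabla u^t \cdot \nabla \eta \dx + \int_\Omega g(u^t)\eta\,\xi(t) \dx = \int_\Omega (f\circ T_t)\eta\,\xi(t) \dx \qquad \forall \eta \in H^1_0(\Omega),
\end{displaymath}
and
\begin{displaymath}
\mathcal J(\Omega_t) = \int_\Omega j\bigl(T_t, u^t, DT_t^{-\mathsf T}\nabla u^t\bigr)\xi(t) \dx .
\end{displaymath}
Since $|DV|\le 1$ and $t\le \frac12$, the quantities $\|DT_t^{\pm1}\|_{L^\infty}$, $\xi(t)$ and $\xi(t)^{-1}$ are bounded uniformly. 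Moreover $A,\xi$ are polynomial or rational in $t$ with $\|A^{(m)}(t)\|_{L^\infty}+|\xi^{(m)}(t)|\le C$ for $m\le 2$, again uniformly in $V$ and $\Omega$. Lemma \ref{lem:apriori} applied on $\Omega_t\in\mathcal S$ gives $\|u^t\|_{H^1}+\|u^t\|_{L^\infty}\le C$ uniformly in $t$.

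The key step is a Lagrangian/adjoint representation of the difference. Rather than proving differentiability of $t\mapsto u^t$ directly, I would use the mean value structure. Let $F(t,w)$ denote the transported cost and $E(t,w)[\eta]$ the transported state equation. Then
\begin{displaymath}
\mathcal J(\Omega_t)-\mathcal J(\Omega) = F(t,u^t)-F(0,u) - E(t,u^t)[p]
\end{displaymath}
with $p$ the adjoint from \eqref{adj}, since $E(t,u^t)=0$. First I establish the Lipschitz estimate $\|u^t-u\|_{H^1}\le Ct$. This comes from subtracting the equations, testing with $u^t-u$, using monotonicity of $g$, the $L^\infty$ bound, and $f\in H^1(D)$, which gives $\|f\circ T_t - f\|_{L^2}\le Ct\|\nabla f\|_{L^2}$. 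Next I Taylor-expand $F$ and $E$ in both arguments around $(0,u)$ to second order. The first-order terms in $w$ cancel by \eqref{adj}, and the first-order term in $t$ reproduces exactly $t\,\mathcal J'(\Omega)[V]$ as in \eqref{firstvar}. The remainder is bounded by $C(t^2+\|u^t-u\|_{H^1}^2 + t\|u^t-u\|_{H^1})\le Ct^2$.

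The bounds on the second-order terms are exactly where \eqref{jest}--\eqref{j4est} enter. The $j_{xx}$, $j_{xu}$ and $j_{xz}$ terms come from the $T_t$-dependence. The terms $j_{uu}$, $j_{uz}$ and $j_{zz}$ are paired with $(u^t-u)^2$, $(u^t-u)\nabla(u^t-u)$ and $|\nabla(u^t-u)|^2$. These are controlled by Hölder with exponents $q/(q-2)$, $q$, $q$ and by \eqref{Dembed}, evaluated at intermediate points whose norms are bounded by $C$. I note that $j_{uz}$ is not listed among the assumptions, so I would have to route that term differently. One option is to expand in $u$ and $z$ separately, keeping $z=\nabla u$ fixed while expanding in $u$ and then expanding in $z$, so that only $j_{uu}$, $j_{zz}$ and the first-derivative bounds \eqref{j1est}, \eqref{j2est} appear.

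The semilinear part $g(u^t)-g(u)-g'(u)(u^t-u)$ is handled by $g\in C^2$ together with the uniform $L^\infty$ bound from Lemma \ref{lem:apriori}. This is why the truncation argument giving $\|u\|_{L^\infty}\le c^\star$ matters. With the remainder bound in hand,
\begin{displaymath}
\mathcal J(\Omega_t)-\mathcal J(\Omega)\le t\,\mathcal J'(\Omega)[V]+Ct^2 \le \gamma t\,\mathcal J'(\Omega)[V] - (1-\gamma)\epsilon t + Ct^2 ,
\end{displaymath}
so the choice $\delta=\min\{\frac12,(1-\gamma)\epsilon/C\}$ works. The main obstacle I expect is the uniform second-order remainder estimate. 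It must hold with a constant independent of $\Omega$, for which there is no regularity at all, so every estimate has to live in $H^1_0(\Omega)\subset H^1_0(D)$ with constants only from $D$. The mixed $u$–$z$ second derivative term is the delicate point.
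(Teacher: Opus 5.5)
Your overall route is the paper's: pull back to $\Omega$, let the adjoint $p$ absorb the terms linear in $u^t-u$, get $\Vert u^t-u\Vert_{H^1}\le Ct$ from the monotonicity of $g$, and bound the rest by the growth conditions and the $L^\infty$ bound. The gap is your claim that \emph{every} remainder is $O(t^2)$ by a second-order Taylor expansion in $t$. This fails for the source term $\int_\Omega (f\circ T_t\,\det DT_t-f)\,p\,dx$, which is the paper's $\tilde T_1$.

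A second-order expansion in $t$ needs $\partial_t^2(f\circ T_t)=D^2f\circ T_t[V,V]$, i.e.\ $f\in H^2$, but only $f\in H^1(D)$ is assumed. The remainder $f\circ T_t\det DT_t-f-t\,\mathrm{div}(fV)$ contains $\int_0^t(\nabla f\circ T_s-\nabla f)\cdot V\,ds$, which is only $o(t)$ in $L^2$, with no rate. So your choice $\delta=\min\{\frac12,(1-\gamma)\epsilon/C\}$ is unjustified as written.

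This is exactly why the paper keeps the term $ct\sup_{0\le\sigma\le t}\Vert\nabla f\circ T_\sigma-\nabla f\Vert_{L^2}$ in \eqref{eq:tildet1} and \eqref{eq:jdecrease}. It then shrinks $\delta$ until this supremum is below $\frac{1}{2c}(1-\gamma)\epsilon$. This can be done uniformly in $V$ because $|T_\sigma-\mathrm{id}|\le\sigma\,\mathrm{diam}\,D$ (approximate $\nabla f$ in $L^2$ by a smooth field). It is also why $\delta$ depends on $f$ itself rather than only on $\Vert f\Vert_{H^1}$.

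Alternatively, you can recover a genuine $O(t^2)$ bound by moving the derivative onto $p$. Set $\Psi(t):=\int_D (f\circ T_t)\det DT_t\,p\,dx$. Changing variables and integrating by parts against $p\circ T_t^{-1}\in H^1_0(D)$ gives $\Psi'(t)=-\int_D (f\circ T_t)\det DT_t\,(DT_t^{-1}V)\cdot\nabla p\,dx$. Hence $\Psi'(0)=\int_D\mathrm{div}(fV)\,p\,dx$ and $|\Psi'(t)-\Psi'(0)|\le Ct\Vert f\Vert_{H^1}\Vert\nabla p\Vert_{L^2}$. Either way, this term needs its own argument.

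Your proposed workaround for the mixed derivative does not eliminate $j_{uz}$. Expanding first in $u$ at the frozen argument $z'=DT_t^{-\mathsf T}\nabla u^t$ yields $j_u(T_t,u,z')(u^t-u)$. The adjoint equation only cancels $j_u(\cdot,u,\nabla u)(u^t-u)$. The difference is second order only if $j_u$ is Lipschitz in $z$ with controlled growth, which is again a bound on $j_{uz}$. The other order produces $j_z(T_t,u^t,\nabla u)-j_z(T_t,u,\nabla u)$, with the same problem.

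The paper's remainder $T_{2,5}$ is the full second $s$-derivative along the segment. It contains the same mixed term $2j_{uz}\cdot(DT_t^{-\mathsf T}\nabla \hat u_t-\nabla u)(\hat u_t-u)$ and is estimated only by reference to the earlier work, so there is no trick here that you are missing. The clean remedy is to add a growth assumption on $j_{uz}$ that allows H\"older pairing with $u^t-u\in L^q$ and $DT_t^{-\mathsf T}\nabla u^t-\nabla u\in L^2$. For example, $|j_{uz}|\le\varphi_6+c(|u|^{\frac q2-1}+|z|^{1-\frac2q})$ with $\varphi_6\in L^{\frac{2q}{q-2}}(D)$ for $q>2$.
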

\begin{proof}
The proof follows the lines of the proof of \cite[Lemma 3.2]{DHH25-convergence}, in which
a corresponding result was obtained in a finite--dimensional setting, where
state, adjoint state and deformation field are elements of finite element spaces. 
Nevertheless many arguments immediately carry over to the infinite--dimensional case and
we will direct the reader to the corresponding argument in \cite{DHH25-convergence}
when appropriate. \\
Let $\Omega_t=T_t(\Omega)$ with $T_t(x)=x+t V(x)$. In view of the definition of $\mathcal J$
we have
\begin{displaymath}
\mathcal J(\Omega_t) = \int_{\Omega_t} j(\cdot,u_t,\nabla u_t)  \dx,
\end{displaymath}
where $u_t \in H^1_0(\Omega_t)$ solves
\begin{displaymath}  
\int_{\Omega_t} \nabla u_t \cdot \nabla \eta_t  \dx + \int_{\Omega_t} g(u_t) \eta_t \dx = \int_{\Omega_t} f \eta_t  \dx \qquad \forall \eta_t \in H^1_0(\Omega_t).
\end{displaymath}
Choosing $\eta_t:= \eta \circ T_t^{-1} \in H^1_0(\Omega_t)$ for a fixed $\eta \in H^1_0(\Omega)$ we thus infer that 
\begin{equation}  \label{hatuht}
\int_{\Omega_t} \nabla u_t \cdot \nabla (\eta \circ T_t^{-1})  \dx + \int_{\Omega_t} g(u_t) \eta \circ T_t^{-1} \dx = \int_{\Omega_t} f \eta \circ T_t^{-1}  \dx \qquad
\forall  \eta \in H^1_0(\Omega).
\end{equation}
Transforming to $\Omega$ we obtain
\begin{equation} \label{uhtrel}
\int_{\Omega} \nabla u_t \circ T_t \cdot \nabla (\eta \circ T_t^{-1}) \circ T_t \, | \mbox{det} D T_t |  \dx + \int_{\Omega} g(u_t\circ T_t) \eta \, | \mbox{det} D T_t |dx =
\int_{\Omega} f \circ T_t \, \eta \, | \mbox{det} D T_t |  \dx 
\end{equation}
for all $\eta \in H^1_0(\Omega)$. Since $\mbox{det} DT_t = \mbox{det}(I+ t DV)$ and $| D V | \leq 1$ in $\bar D$ one easily
verifies that 
\begin{equation} \label{dif1}
\mbox{det} DT_t -1 = t {\Div} V + r_1, \quad \mbox{ with } | r_1 | \leq c t^2,
\end{equation}
where the constant $c$ only depends on $d$. This also implies that  there exists $\delta_1>0$ such
that $\mbox{det} DT_t >0, 0 \leq t \leq \delta_1$. Defining $A_t:= (DT_t)^{-1} (D T_t)^{- \mathsf{T}} \mbox{det} DT_t$ we hence see that the function
$\hat u_t:= u_t \circ T_t \in H^1_0(\Omega)$ satisfies
\begin{equation} \label{stateuht}
\int_{\Omega} A_t \nabla \hat u_t \cdot \nabla \eta  \dx + \int_{\Omega} g(\hat u_t) \eta \, \mbox{det} D T_t \dx = \int_{\Omega} f \circ T_t \, \eta \, \mbox{det} D T_t  \dx \qquad
\mbox{ for all } \eta \in H^1_0(\Omega).
\end{equation}
As a result we deduce that
\begin{eqnarray}
\mathcal J(\Omega_t) - \mathcal J(\Omega) 
& = & \int_{\Omega} \bigl( j(T_t,\hat u_t, D T_t^{- \mathsf{T}} \nabla \hat u_t) \, \mbox{det} DT_t - j(\cdot,u,\nabla u) \bigr)  \dx   \nonumber \\
& = & \int_{\Omega} j(\cdot,u,\nabla u) (\mbox{det} DT_t -1 )  \dx + \int_{\Omega} \bigl( j(T_t,\hat u_t,DT_t^{- \mathsf{T}} \nabla \hat u_t) - j(\cdot,u,\nabla u) \bigr)  \dx    \nonumber  \\
& & +  \int_{\Omega} \bigl( j(T_t,\hat u_t, DT_t^{- \mathsf{T}} \nabla \hat u_t) -j(\cdot,u,\nabla u) \bigr) ( \mbox{det} DT_t - 1)  \dx  \nonumber \\
& = &  \sum_{j=1}^3 T_j.  \label{difj}
\end{eqnarray}
Arguing as in \cite[(3.7)]{DHH25-convergence} and using Lemma \ref{lem:apriori} we obtain
\begin{equation} \label{eq:t1}
T_1   \leq t \int_{\Omega} j(\cdot,u,\nabla u) \Div V  \dx + c t^2.
\end{equation} 
The term $T_2$ is written in the same way as in \cite[(3.8)]{DHH25-convergence} so that
\begin{eqnarray}
T_2 & = & t  \int_{\Omega} j_x(\cdot,u,\nabla u) \cdot V  \dx - t \int_{\Omega} j_z(\cdot,u,\nabla u) \cdot DV^{\mathsf{T}} \nabla u \nonumber \\
& & + \int_{\Omega} j_z(\cdot,u,\nabla u) \cdot \bigl( (DT_t^{-\mathsf{T}} -I + t DV^{\mathsf{T}} ) \nabla \hat u_t + t DV^{\mathsf{T}} \nabla (u - \hat u_t) \bigr)  \dx \nonumber \\
& & +  \int_{\Omega} \bigl( j_u(\cdot,u,\nabla u)(\hat u_t -u)  + j_z(\cdot,u,\nabla u) \cdot \nabla (\hat u_t - u) \bigr)  \dx \nonumber \\
& & + \int_{\Omega}  \int_0^1 (1-s) \frac{d^2}{ds^2} \left[ j(\cdot +stV,s \hat u_t +(1-s) u,
s  DT_t^{-\mathsf{T}} \nabla \hat u_t +(1-s) \nabla u) \right] ds  \dx \nonumber \\
& = & \sum_{j=1}^5 T_{2,j}. \label{t2}
\end{eqnarray}
The integrals $T_{2,3}$ and $T_{2,5}$ are handled in the same way as in \cite[(3.9), (3.17)]{DHH25-convergence} and therefore
\begin{equation} \label{eq:t23}
T_{2,3} +T_{2,5}  \leq c \bigl(  t^2 + c \Vert \hat u_t - u \Vert_{H^1}^2 \bigr). 
\end{equation} 
In order to treat $T_{2,4}$ we use  \eqref{adj}, \eqref{state} and \eqref{stateuht} and write
\begin{eqnarray*}
T_{2,4} &=& \int_{\Omega} \nabla p \cdot \nabla ( \hat u_t - u )  \dx + \int_{\Omega} g'(u) p (\hat u_t-u) \dx = \int_{\Omega} \nabla p \cdot \nabla \hat u_t  \dx +  \int_{\Omega}g(\hat u_t)p \dx \\
& & - \int_{\Omega} \nabla p \cdot \nabla u  \dx  - \int_{\Omega}g(u)p \dx 
 + \int_{\Omega} \bigl( g'(u)( \hat u_t -u) - g(\hat u_t) + g(u)) p \dx  \\
 & = & \int_{\Omega} A_t \nabla \hat u_t \cdot \nabla p \dx + \int_{\Omega} g(\hat u_t) p \, 
 \mbox{det} D T_t \dx - \int_{\Omega} \nabla p \cdot \nabla u  \dx  - \int_{\Omega}g(u)p \dx \\
 & & + \int_{\Omega} (I -A_t) \nabla \hat u_t \cdot \nabla p \dx + \int_{\Omega} g(\hat u_t) 
 (1 - \mbox{det} D T_t) p \dx \\
 & & + \int_{\Omega} \bigl( g'(u)( \hat u_t -u) - g(\hat u_t) + g(u)) p \dx  \\
 & = & \int_{\Omega}  \bigl( f \circ T_t \, \mbox{det} D T_t -f \bigr) p \dx
+ \int_{\Omega} (I -  A_t) \nabla \hat u_t \cdot \nabla p \dx + \int_{\Omega} g(\hat u_t) 
 (1 - \mbox{det} D T_t) p \dx \\
 & & + \int_{\Omega} \bigl( g'(u)( \hat u_t -u) - g(\hat u_t) + g(u)) p \dx=: \sum_{k=1}^4 \tilde T_k.
 \end{eqnarray*}
 Arguing as in \cite[(3.14)]{DHH25-convergence} one shows that 
\begin{equation} \label{eq:tildet1} 
\tilde T_1  \leq t \int_{\Omega} \Div( f V) p  \dx +  c t^2 + c t  \sup_{0 \leq \sigma \leq t} \Vert \nabla f \circ T_\sigma - \nabla f \Vert_{L^2}.  
\end{equation}
 Since  $A_t = (DT_t)^{-1} (DT_t)^{-\mathsf{T}} \mbox{det} DT_t$ and $| DV| \leq 1$
 one verifies that
\begin{equation} \label{idmat}
I - A_t = t \bigl( D V + D V^{\mathsf{T}} - \Div V I  \bigr) + R_2, \qquad \mbox{ with } | R_2  | \leq c t^2,
\end{equation}
where $c$ only depends on $d$. Therefore
\begin{eqnarray} 
\tilde T_2 & = & \int_{\Omega} (I- A_t) \nabla p \cdot \nabla u  \dx + \int_{\Omega} (I - A_t) \nabla p \cdot \nabla ( \hat u_t - u)  \dx \nonumber \\
& = & 
t \int_{\Omega} \bigl( D V + D V^{\mathsf{T}} - \Div V I  \bigr) \nabla u \cdot \nabla p  \dx 
+ \int_{\Omega} R_2 \nabla u \cdot \nabla p  \dx \nonumber \\
& & + \int_{\Omega} (I - A_t) \nabla p \cdot \nabla ( \hat u_t - u)  \dx 
\nonumber\\
& \leq &  t \int_{\Omega} \bigl( D V + D V^{\mathsf{T}} - \Div V I  \bigr) \nabla u \cdot \nabla p  \dx + c \Vert \nabla p \Vert_{L^2} \bigl( t^2 +  t \Vert \hat u_t - u \Vert_{H^1} \bigr),  \label{eq:tildet2}
\end{eqnarray}
where we also used  \eqref{discapriori}. In order to deal with $\tilde T_3$ we use \eqref{dif1} and \eqref{discapriori} to obtain
\begin{eqnarray}
\tilde T_3 & =& \int_{\Omega} g(u) p (1 - \mbox{det} DT_t) \dx + \int_{\Omega} (g(\hat u_t - g(u)) p (1 - \mbox{det} D T_t) \dx \nonumber \\
& \leq & -t\int_{\Omega} g(u)p \mbox{div}V \dx + c t^2+ c t  \Vert \hat u_t - u \Vert_{L^2} \Vert p \Vert_{L^2} \nonumber \\
& \leq &  -t\int_{\Omega} g(u)p \mbox{div}V \dx + c t^2+ c \Vert \hat u_t - u \Vert_{L^2}^2. \label{eq:tilde3}
\end{eqnarray}
Finally, Taylor expansion yields
\[
g(\hat u_t)-g(u) = g'(u)(\hat u_t-u) + \int\limits_0^1 (1-s)g''(u+s(\hat u_t-u))ds (\hat u_t-u)^2
\]
and hence
\begin{eqnarray}
\tilde T_4 & = & 
- \int_{\Omega} p \int\limits_0^1 (1-s)g''(u+s(\hat u_t-u))ds (\hat u_t-u)^2 \dx \label{eq:tilde4} \\
& \leq & c 
 \int_{\Omega} \vert \hat u_t-u\vert^2  \vert p\vert \dx \le c 
 \Vert \hat u_t -u\Vert_{L^4}^2  \Vert p\Vert_{L^2} \leq  c \Vert \hat u_t-u\Vert_{H^1}^2. \nonumber
\end{eqnarray}
Collecting the above terms we have
\begin{eqnarray}
T_{2,4} & \leq &  t \int_{\Omega} \bigl( D V + D V^{\mathsf{T}} - \Div V I  \bigr) \nabla u \cdot \nabla p  \dx + 
t \int_{\Omega} \Div( f V) p  \dx  \nonumber \\
& & +  c t^2 +c \Vert \hat u_t - u \Vert_{H^1}^2 + c t  \sup_{0 \leq \sigma \leq t} \Vert \nabla f \circ T_\sigma - \nabla f \Vert_{L^2} -t\int_{\Omega} g(u)p \mbox{div}Vdx. \label{t24}
\end{eqnarray}

In conclusion,
\begin{eqnarray}
T_2 & \leq &  t  \int_{\Omega} j_x(\cdot,u,\nabla u) \cdot V  \dx - t \int_{\Omega} j_z(\cdot,u,\nabla u) \cdot DV^{\mathsf{T}} \nabla u \nonumber \\
& & +  t \int_{\Omega} \bigl( D V + D V^{\mathsf{T}} - \Div V I  \bigr) \nabla u \cdot \nabla p  \dx + 
t \int_{\Omega} \Div( f V) p -t\int_{\Omega} g(u)p \mbox{div}V  \dx  \nonumber \\
& & +  c t^2 +c \Vert \hat u_t - u \Vert_{H^1}^2 + c t  \sup_{0 \leq \sigma \leq t} \Vert \nabla f \circ T_\sigma - \nabla f \Vert_{L^2}. \label{t2a}
\end{eqnarray}
Finally, $T_3$ can be estimated as in \cite[(3.19)]{DHH25-convergence},
so that 
\begin{equation} \label{t3}
T_3 \leq ct \bigl( t + \Vert \hat u_t - u \Vert_{H^1} \bigr) \leq c t^2 + c \Vert \hat u_t - u \Vert_{H^1}^2.
\end{equation}
If we insert the estimates \eqref{eq:t1}, \eqref{t2a} and \eqref{t3} into \eqref{difj} and recall \eqref{firstvar} we obtain
\begin{equation} \label{difj1}
\mathcal J(\Omega_t) - \mathcal J(\Omega)  \leq    t \mathcal J'(\Omega)[V]  + ct \bigl( t  +  \sup_{0 \leq \sigma \leq t} \Vert \nabla f \circ T_\sigma - \nabla f \Vert_{L^2} \bigr)
+ c \Vert \hat u_t - u \Vert^2_{H^1}.
\end{equation}
It remains to bound $\Vert \hat u_t - u \Vert_{H^1}$. To do so,  we use \eqref{state} and \eqref{stateuht} and derive
\begin{eqnarray*}
\lefteqn{\int_{\Omega} A_t  \nabla ( \hat u_t - u) \cdot \nabla \eta  \dx + \int_{\Omega}\big(g(\hat u_t)-g(u)\big)\eta \dx}\\ &=& \int_{\Omega} (I-A_t) \nabla u \cdot \nabla \eta  \dx + \int_{\Omega}g(\hat u_t)\eta (1-\mbox{ det }DT_t) \dx + \int_{\Omega} \bigl(
f \circ T_t  \, \mbox{det} D T_t -f \bigr) \eta  \dx
\end{eqnarray*}
for all $\eta \in H^1_0(\Omega)$.
It follows from  \eqref{idmat} that there exists $0< \delta_2 \leq \delta_1$ such that $A_t \xi \cdot \xi \geq \frac{1}{2} | \xi |^2$ for all $\xi \in \mathbb R^d$ and $0 \leq t \leq \delta_2$. Inserting $\eta=
\hat u_t-u$ into the above relation and using \eqref{idmat} as well as the
fact that $g' \geq 0$  we obtain
\begin{equation} \label{eq:hatu}
\frac{1}{2} \int_D |  \nabla (\hat u_t - u) |^2  \dx   \leq  c t \Vert \nabla u \Vert_{L^2} \Vert \nabla (\hat u_t - u) \Vert_{L^2}
+ c t(1+ \Vert  f \circ T_t  \, \mbox{det} D T_t -f \Vert_{L^2})  \Vert \hat u_t -u \Vert_{L^2}.
\end{equation}
One easily infers from \cite[(3.13)]{DHH25-convergence} that
\begin{displaymath}
\Vert  f \circ T_t  \, \mbox{det} D T_t -f \Vert_{L^2} \leq c
\Vert f \Vert_{H^1},
\end{displaymath}
so that \eqref{eq:hatu} along with Poincar\'e's inequality yields
$ \Vert \hat u_t - u \Vert_{H^1} \leq ct$.
Using this estimate in 
\eqref{difj1} and applying the assumption  $\mathcal J'(\Omega)[V] \leq - \epsilon$ we obtain
\begin{equation} \label{eq:jdecrease}
\mathcal J(\Omega_t) - \mathcal J(\Omega) 
 \leq  \gamma t \mathcal J'(\Omega)[V] -(1-\gamma) \epsilon t   + c t^2  + c t  \sup_{0 \leq \sigma \leq t} \Vert \nabla f \circ T_\sigma - \nabla f \Vert_{L^2}. 
\end{equation}
Arguing as after (3.21) in \cite{DHH25-convergence} we obtain
the existence of $0< \delta \leq \delta_2$ such that
\begin{displaymath}
\sup_{0 \leq \sigma \leq t} \Vert \nabla f \circ T_\sigma - \nabla f \Vert_{L^2} \leq 
\frac{1}{2c} (1-\gamma)   \epsilon \quad \mbox{ for } 0 \leq t \leq  \delta.
\end{displaymath}
Thus we deduce from \eqref{eq:jdecrease} for $0 \leq t \leq \delta$
\begin{displaymath}
\mathcal J(\Omega_t) - \mathcal J(\Omega)
 \leq   \gamma t \mathcal J'(\Omega)[V] -(1-\gamma) \epsilon t
 + c t \delta + \frac{1}{2} t(1-\gamma) \epsilon \leq 
 \gamma t \mathcal J'(\Omega)[V]
\end{displaymath}
provided one chooses in addition $\delta \leq \frac{1}{2c}(1-\gamma)
\epsilon$. 
\end{proof}

\noindent
We are now in position to prove our first convergence result.

\begin{thm} \label{conv1}
Let $(\Phi^k)_{k \in \mathbb N_0}$ and $(\Omega^k=\Phi^k(\Omref))_{k \in \mathbb N_0} \subset \mathcal S$ be the sequence generated by Algorithm \ref{alg:Steepest}. 
Then  $\Vert \mathcal J'(\Omega^k) \Vert \rightarrow 0$ as $k \rightarrow \infty$.
\end{thm}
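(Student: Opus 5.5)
We argue by contradiction, combining the uniform Armijo step of Lemma \ref{decrease} with a uniform lower bound on $\mathcal J$.

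First, observe that $\Vert \mathcal J'(\Omega^k)\Vert = -\mathcal J'(\Omega^k)[V^k]$. The admissible set is symmetric under $W\mapsto -W$, so the supremum equals minus the infimum, and the infimum is attained at $V^k$. If the algorithm stops after finitely many steps there is nothing to prove, so we assume it produces an infinite sequence.

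Second, we show that $\mathcal J$ is bounded below uniformly on $\mathcal S$. By \eqref{jest} and Lemma \ref{lem:apriori},
\begin{displaymath}
|\mathcal J(\Omega)| \le \Vert\varphi_1\Vert_{L^1} + c_1\bigl(\Vert u\Vert_{L^q}^q + \Vert\nabla u\Vert_{L^2}^2\bigr) \le C,
\end{displaymath}
where we use \eqref{Dembed} and the bound \eqref{discapriori} for $u$ extended by zero to $D$. Hence $\mathcal J(\Omega^k)\ge -C$ for all $k$.

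Third, step 3 of Algorithm \ref{alg:Steepest} gives
\begin{displaymath}
\mathcal J(\Omega^{k+1}) \le \mathcal J(\Omega^k) - \gamma t_k \Vert \mathcal J'(\Omega^k)\Vert ,
\end{displaymath}
so $(\mathcal J(\Omega^k))_k$ is nonincreasing. Summing over $k$ yields
\begin{displaymath}
\gamma\sum_{k} t_k \Vert \mathcal J'(\Omega^k)\Vert \le \mathcal J(\Omega^0) + C < \infty .
\end{displaymath}

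Suppose now that the claim fails. Then there are $\epsilon>0$ and a subsequence $(k_l)_l$ with $\Vert \mathcal J'(\Omega^{k_l})\Vert \ge \epsilon$, that is, $\mathcal J'(\Omega^{k_l})[V^{k_l}]\le -\epsilon$. Lemma \ref{decrease}, applied with $\Omega=\Omega^{k_l}\in\mathcal S$ (an open set by Lemma \ref{lem:propalg}), provides a $\delta\in(0,1)$ depending only on $j,f,g,D,d,\gamma,\epsilon$, and in particular independent of $l$. For this $\delta$ the Armijo inequality holds for every $t\in[0,\delta]$. The rule in step 3 selects the largest admissible power of $\frac12$, so $t_{k_l}\ge \min\{\frac12,\frac{\delta}{2}\}=\frac{\delta}{2}$. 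Consequently each index $k_l$ contributes at least $\gamma\frac{\delta}{2}\epsilon$ to the convergent series above. There are infinitely many such indices, which is a contradiction. Therefore $\Vert \mathcal J'(\Omega^k)\Vert\to 0$.

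The key point is that $\delta$ in Lemma \ref{decrease} is uniform in $\Omega$. This is what rules out the step sizes $t_{k_l}$ degenerating to zero along the subsequence. Since the lemma is already available, the remaining work is only to check the uniform lower bound on $\mathcal J$ from \eqref{jest} and Lemma \ref{lem:apriori}.
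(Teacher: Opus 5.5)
Your proposal is correct and follows essentially the same argument as the paper: by contradiction, the uniform $\delta$ from Lemma \ref{decrease} forces $t_{k_j}\ge \delta/2$, so each index with $\Vert\mathcal J'(\Omega^{k_j})\Vert\ge\epsilon$ gives a decrease of at least $\gamma\frac{\delta}{2}\epsilon$. This is incompatible with the monotonicity of $\mathcal J(\Omega^k)$ together with the uniform lower bound from \eqref{jest} and \eqref{discapriori}; your summability formulation of $\sum_k t_k\Vert\mathcal J'(\Omega^k)\Vert$ is just a repackaging of the paper's telescoping estimate.
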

    
\begin{proof} Let us assume that $\Vert \mathcal J'(\Omega^k) \Vert$ 
does not converge to zero. Then there exists $\epsilon>0$ and a subsequence $(\Omega^{k_j})_{j \in \mathbb N}$
such that $\Vert \mathcal J'(\Omega^{k_j}) \Vert \geq \epsilon$ for all  $j\in \mathbb N$. According to step 2 in Algorithm \ref{alg:Steepest}
let $V^{k_j} \in W^{1,\infty}_0(D,\mathbb R^d)$ such that $| D V^{k_j} | \leq 1$ a.e. in $D$ and 
\begin{equation} \label{leps}
\mathcal J'(\Omega^{k_j})[V^{k_j}] = -  \|\mathcal{J}'(\Omega^{k_j})\| \leq - \epsilon \quad \mbox{ for all } j \in \mathbb N.
\end{equation}
We infer from Lemma \ref{decrease} that there exists $\delta>0$ which is independent of $j \in \mathbb N$ such that
\begin{displaymath}
\mathcal J\bigl( (\mbox{id}+ t V^{k_j})(\Omega^{k_j}) \bigr)  - \mathcal J(\Omega^{k_j}) \leq \gamma t \mathcal J'(\Omega^{k_j})[V^{k_j}] 
 \qquad \mbox{ for all } 0 \leq t \leq \delta.
\end{displaymath}
Thus the Armijo step size  $t_{k_j} $ chosen in step 3 of
the algorithm satisfies $t_{k_j} \geq \frac{\delta}{2}$ for all $j \in \mathbb N$ so that we obtain with
$\Omega^{k_j+1} = (\mbox{id}+t_{k_j} V^{k_j})(\Omega^{k_j})$
and \eqref{leps} that
\begin{displaymath}
\mathcal J(\Omega^{k_j}) - \mathcal J(\Omega^{k_{j}+1}) \geq -\gamma t_{k_j} \mathcal J'(\Omega^{k_j})[V^{k_j}]  \geq \gamma t_{k_j} \epsilon
 \geq \gamma \frac{\delta}{2} \epsilon \qquad  \mbox{ for all } j \in \mathbb N.
\end{displaymath}
Combining this bound with the fact that 
$\mathcal J(\Omega^{k+1}) \leq \mathcal J(\Omega^k)$ 
we deduce for $J \in \mathbb N$
\begin{displaymath}
J \gamma \frac{\delta}{2} \epsilon \leq \sum_{j=1}^J (
\mathcal J(\Omega^{k_j}) - \mathcal J(\Omega^{k_{j}+1}))
\leq \sum_{k=0}^{k_J} (\mathcal J(\Omega^k) - \mathcal J(\Omega^{k+1})) = \mathcal J(\Omega^0) - \mathcal J(
\Omega^{k_J+1}).
\end{displaymath}
Thus we obtain a contradiction for large enough $J$ since \eqref{jest} yields
\begin{displaymath}
\mathcal J(\Omega^k)  \geq   - \int_{\Omega^k} | j(\cdot,u_k,\nabla u_k) |  \dx \geq - \int_{\Omega^k} \left( \varphi_1+ c_1 \bigl(  | u_k |^q + | \nabla u_k |^2 \bigr) \right)  \dx  
 \geq  - c \bigl( 1+ (c^*)^q  \bigr)
\end{displaymath}
where $u_k$ denotes the solution of \eqref{state} in $\Omega^k$ and \eqref{discapriori} was used. 
\end{proof}

\noindent
\subsection{Convergence to a stationary shape in two dimensions}

In what follows we restrict our analysis to the two--dimensional case and assume in addition that
the reference domain $\Omref$ has a Lipschitz boundary. 
In order to examine the convergence of the sequence $(\Omega^k)_{k \in \mathbb N_0}$ with
$\Omega^k=\Phi^k(\Omref)$ we require the following concepts, see
e.g. \cite{HenMic06}. 
Given two open sets  $\Omega_1, \Omega_2 \subset D$ their Hausdorff complementary distance 
is defined by
\begin{displaymath}
\rho_H^c(\Omega_1,\Omega_2):= \max_{x \in \bar D} | d_{\complement \Omega_1}(x) - d_{\complement \Omega_2}(x) |,
\end{displaymath}
where $d_{\complement \Omega}(x):= \inf\{|x-y| : y \in \bar D \setminus \Omega\}$ for all $x \in D$. 
We say that the sequence of open sets $(\Omega^k)_{k \in \mathbb N_0}$  converges to the open set $\Omega$ in the sense of the Hausdorff complementary metric if $\rho_H^c(\Omega^k,\Omega) \rightarrow 0,
k \rightarrow \infty$. Since our optimisation problem is constrained by the elliptic boundary value problem \eqref{state} we shall require   continuity of \eqref{state} with respect to $\Omega$ in an appropriate sense. In order to formulate the corresponding concept  we shall consider
$H^1_0(\Omega)$ as a closed subspace of $H^1_0(D)$ by associating with each element $u \in H^1_0(\Omega)$ its extension by zero $e_0(u) \in H^1_0(D)$. 

\begin{defn} Let $\Omega^k, \Omega$ be open subsets of $D$. \\
a) We say that  $(\Omega^k)_{k \in \mathbb N_0}$ $\gamma$--converges to $\Omega$, if $e_0(u_k) \rightarrow e_0(u)$
in $H^1_0(D)$ for every $f \in H^{-1}(D)$. Here, $u_k,u$ denote the solutions of
\[\int_{\Omega^k}\nabla u_k \cdot \nabla \eta \dx = \langle f,\eta\rangle \quad  \forall \eta \in H^1_0(\Omega^k) \quad \text{ and } \quad \int_{\Omega}\nabla u \cdot \nabla \eta \dx = \langle f,\eta\rangle \quad  \forall \eta \in H^1_0(\Omega), \,\]respectively. Furthermore,  $\langle \cdot, \cdot \rangle$ is  the duality pairing between $H^{-1}$ and $H^1_0$. \\[2mm]
b) We say that $(H^1_0(\Omega^k))_{k \in \mathbb N_0}$ is Mosco--convergent to $H^1_0(\Omega)$ if 
\begin{itemize}
\item for every $u \in H^1_0(\Omega)$ there exists a sequence $(u_k)_{k \in \mathbb N}$ with $u_k\in H^1_0(\Omega^k)$ such that $e_0(u_k) \rightarrow e_0(u)$ in $H^1_0(D)$;
\item for every sequence $(u_k)_{k \in \mathbb N}$ such that $u_k \in  H^1_0(\Omega^k)$ and $e_0(u_k)\rightharpoonup v \in H^1_0(D)$ we have that $v\in H^1_0(\Omega)$.
\end{itemize}
\end{defn}

\begin{thm} \label{gM}  (\cite[Proposition 3.5.5]{HenPie18}) Let $(\Omega^k)_{k \in \mathbb N_0}$ be
a sequence of open subsets of $D$. Then $(\Omega^k)_{k \in \mathbb N_0}$ $\gamma$--converges to $\Omega$
if and only if $(H^1_0(\Omega^k))_{k \in \mathbb N_0}$ is Mosco--convergent to $H^1_0(\Omega)$.
\end{thm}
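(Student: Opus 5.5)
The plan is to prove both implications directly from the definitions. The main tools are the uniform coercivity of the Dirichlet form on $H^1_0(D)$ and the weak lower semicontinuity of the Dirichlet energy. Throughout, $P_k$ denotes the orthogonal projection of $H^1_0(D)$, with inner product $(\nabla\cdot,\nabla\cdot)_{L^2}$, onto the closed subspace $e_0(H^1_0(\Omega^k))$, and $P$ the projection onto $e_0(H^1_0(\Omega))$. For $f\in H^{-1}(D)$ let $w_f\in H^1_0(D)$ be its Riesz representative. Then the solution of the problem in $\Omega^k$ is exactly $e_0(u_k)=P_k w_f$, and likewise $e_0(u)=Pw_f$. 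Since every $w\in H^1_0(D)$ is some $w_f$, $\gamma$-convergence is equivalent to strong convergence $P_k w\to Pw$ for every $w\in H^1_0(D)$.

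\emph{Mosco $\Rightarrow$ $\gamma$.} Fix $w$ and set $v_k:=P_kw$. Since $\|v_k\|\le\|w\|$, we can extract a subsequence with $v_k\rightharpoonup v$, and the second Mosco condition gives $v\in H^1_0(\Omega)$. Now take any $\phi\in H^1_0(\Omega)$. By the first Mosco condition there are $\phi_k\in H^1_0(\Omega^k)$ with $\phi_k\to\phi$ strongly. The Galerkin orthogonality $(\nabla(w-v_k),\nabla\phi_k)=0$ then passes to the limit, because it pairs a weakly convergent sequence with a strongly convergent one. This yields $(\nabla(w-v),\nabla\phi)=0$, hence $v=Pw$. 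For strong convergence we use $\|\nabla v_k\|^2=(\nabla w,\nabla v_k)\to(\nabla w,\nabla v)=\|\nabla v\|^2$, and norm convergence together with weak convergence gives strong convergence. Because the limit is unique, the whole sequence converges.

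\emph{$\gamma$ $\Rightarrow$ Mosco.} For the first condition, given $u\in H^1_0(\Omega)$ we take $u_k:=P_ku\to Pu=u$ strongly. For the second, suppose $e_0(u_k)\rightharpoonup v$ with $u_k\in H^1_0(\Omega^k)$. Since $P_k$ is self-adjoint, for any $\psi\in H^1_0(D)$ we have $(\nabla u_k,\nabla\psi)=(\nabla u_k,\nabla P_k\psi)$, and $P_k\psi\to P\psi$ strongly. Passing to the limit gives $(\nabla v,\nabla\psi)=(\nabla v,\nabla P\psi)=(\nabla Pv,\nabla\psi)$ for all $\psi$, so $v=Pv\in H^1_0(\Omega)$.

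The expected main obstacle is the step in the first implication where the limit $v$ must be identified. It needs both Mosco conditions at once, the recovery sequences for the test functions and the closedness for the weak limit, together with the weak–strong pairing. Everything else is routine Hilbert space projection arguments. Since the statement is quoted from \cite[Proposition 3.5.5]{HenPie18}, the paper may simply cite it, but the argument above is self-contained.
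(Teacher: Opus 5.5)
The paper gives no proof of this statement; it is quoted from \cite[Proposition 3.5.5]{HenPie18}. Your argument is a correct, self-contained proof along the standard lines. You recast $\gamma$-convergence as strong convergence $P_k w\to Pw$ of the orthogonal projections onto $e_0(H^1_0(\Omega^k))$. Both directions then reduce to Hilbert-space facts: the weak--strong pairing, Galerkin orthogonality, weak convergence plus norm convergence implying strong convergence, and self-adjointness of $P_k$ for the closedness condition. Compared with the bare citation, your version shows what the equivalence really uses: only that the $e_0(H^1_0(\cdot))$ are closed subspaces of $H^1_0(D)$. It is a statement about strong convergence of projections versus Mosco convergence of their ranges.

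One point you should make explicit. In the direction Mosco $\Rightarrow$ $\gamma$ you apply the second Mosco condition to a weakly convergent \emph{subsequence} $v_{k_j}$. You use it again, implicitly, when you pass from uniqueness of the limit to convergence of the whole sequence, which needs the argument on arbitrary sub-subsequences. This is legitimate for the definition in \cite{HenPie18}, where the closedness condition is imposed on $u_{k_j}\in H^1_0(\Omega^{k_j})$ along arbitrary subsequences. It is not covered by the paper's definition of Mosco convergence (part b) as literally written, which only refers to full sequences $(u_k)_{k\in\mathbb N}$.

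With that literal version the implication is actually false. Take $\Omega^{2k}=D$ and $\Omega^{2k+1}=\Omega$ for a fixed ball $\Omega\Subset D$.
\begin{itemize}
\item The recovery condition holds with constant sequences.
\item Any full sequence with $e_0(u_k)\rightharpoonup v$ has its odd terms in the weakly closed subspace $e_0(H^1_0(\Omega))$, so $v\in H^1_0(\Omega)$.
\item Yet for $f\equiv 1$ the states $e_0(u_k)$ alternate between two distinct functions, so there is no $\gamma$-convergence.
\end{itemize}
So state at the outset that you use the subsequence form of the second condition. Your proof of the converse direction already yields this stronger form, since $P_{k_j}\psi\to P\psi$ along any subsequence.
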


\noindent
Next, for an open set $\Omega \subset D$
we denote by $\sharp \Omega^c$ the number of connected components of $\bar D \setminus \Omega$. 

\begin{thm} \label{sv} ({\v{S}}ver{\'a}k, \cite{vsverak1993optimal}). Let $d=2$ and $(\Omega^k)_{k \in \mathbb N_0}$ a sequence  such that $\rho_H^c(\Omega^k,\Omega) \rightarrow 0, k \rightarrow \infty$ for some open set $\Omega$. If  
$\sharp (\Omega^k)^c$ is uniformly bounded, then $(\Omega^k)_{k \in \mathbb N_0}$ $\gamma$--converges to $\Omega$.
\end{thm}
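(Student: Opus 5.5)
Theorem~\ref{sv} is quoted from \cite{vsverak1993optimal}, and the paper uses it without proof. If I had to prove it, I would go through Theorem~\ref{gM}. Instead of showing $\gamma$--convergence directly, I would verify the two Mosco conditions for $H^1_0(\Omega^k)$ and $H^1_0(\Omega)$. I would pass to subsequences freely: every subsequence having a further subsequence that $\gamma$--converges to the same limit $\Omega$ suffices.

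\emph{First Mosco condition.} This step needs neither $d=2$ nor the bound on $\sharp(\Omega^k)^c$. Given $u\in H^1_0(\Omega)$, I approximate it in $H^1$ by $\phi\in C^\infty_c(\Omega)$. For $K=\operatorname{supp}\phi$ we have $\min_K d_{\complement\Omega}>0$. Since $d_{\complement\Omega^k}\to d_{\complement\Omega}$ uniformly, $d_{\complement\Omega^k}>0$ on $K$ for large $k$, that is, $K\subset\Omega^k$. Hence $\phi\in H^1_0(\Omega^k)$ eventually, and a diagonal argument gives a recovery sequence $u_k\in H^1_0(\Omega^k)$ with $e_0(u_k)\to e_0(u)$.

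\emph{Second Mosco condition.} This is the main obstacle, and it is where both hypotheses enter. Let $u_k\in H^1_0(\Omega^k)$ with $e_0(u_k)\rightharpoonup v$ in $H^1_0(D)$. By Rellich, $e_0(u_k)\to v$ strongly in $L^2(D)$. It suffices to show that $v=0$ quasi-everywhere on $\bar D\setminus\Omega$, since then Hedberg's theorem gives $v\in H^1_0(\Omega)$.

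I would first organise the complements. Write $\bar D\setminus\Omega^k=\bigcup_{i=1}^{N}K^k_i$ with $N$ fixed; components may coincide, which makes the count exactly $N$. After a subsequence, each $K^k_i$ converges in the Hausdorff metric to a compact connected set $K_i$. Hausdorff complementary convergence gives $\bar D\setminus\Omega=\bigcup_i K_i$. The indices split into two groups. Those with $\operatorname{diam}K_i=0$ contribute a finite set $P$ of points, which has zero capacity in two dimensions and can be ignored. The remaining $K_i$ are nondegenerate continua, so $\operatorname{diam}K^k_i\ge r_0>0$ for large $k$.

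The key two-dimensional fact is a uniform capacity-density bound. If $E$ is a continuum meeting $B_r(x)$ and leaving $B_{2r}(x)$, then $\operatorname{cap}(E\cap \bar B_{2r}(x),B_{4r}(x))\ge c_0$ with $c_0$ independent of $r$. I would prove this via the logarithmic capacity of a continuum being comparable to its diameter, or via a radial/circular-projection argument. Combined with a Maz'ya-type Poincaré inequality this yields
\[
\int_{B_{r}(x)}|e_0(u_k)|^2\dx\le C r^2\int_{B_{4r}(x)}|\nabla e_0(u_k)|^2\dx,
\]
for $x\in K_i\setminus P$ and $r<r_0/4$. The point $x$ is approximated by points of $K^k_i$, and $e_0(u_k)$ vanishes q.e.\ on $K^k_i$. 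Passing to the limit, using strong $L^2$ convergence on the left and the uniform $H^1$ bound on the right, I get
\[
\frac{1}{|B_r|}\int_{B_r(x)}|v|^2\dx\le C\,\limsup_k\|\nabla e_0(u_k)\|_{L^2(B_{4r}(x))}^2 ,
\]
and I would argue that the right-hand side tends to $0$ as $r\to 0$ outside a capacity-zero set. One route is a localized version via weak lower semicontinuity and concentration-compactness of the energy measures, which can concentrate only on an at most countable set of atoms. Another is Šverák's original device: lift the bound to the harmonic-measure/Wiener-criterion level. Either way, the Lebesgue-point representative of $v$ vanishes q.e.\ on $\bigcup_i K_i\setminus P$, hence q.e.\ on $\bar D\setminus\Omega$. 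This completes the second Mosco condition, and Theorem~\ref{gM} then gives $\gamma$--convergence.
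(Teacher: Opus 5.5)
The paper gives no proof of this statement. It is imported from \v{S}ver\'ak and used as a black box in Theorem~\ref{conv2}, so there is no in-paper argument to compare with.

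Your reconstruction is correct in outline and follows the standard capacity-density route. Mosco condition (i) uses only Hausdorff complementary convergence. Condition (ii) combines three ingredients:
\begin{itemize}
\item Hedberg's characterisation of $H^1_0(\Omega)$;
\item the scale-invariant lower bound on the capacity of a continuum crossing an annulus, which is the genuinely two-dimensional input (it fails for $d=3$, where curves have zero capacity);
\item Maz'ya's Poincar\'e inequality.
\end{itemize}
Your treatment of components that collapse to points, via the finite set $P$, is exactly what the paper's application needs. There only an upper Lipschitz bound on $\Phi^k$ is assumed, so holes of $\Omref$ may shrink to points in the limit.

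Two steps should be stated more carefully.

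First, the Poincar\'e estimate is available at points $x_k\in K_i^k$, not at $x\in K_i$, because $e_0(u_k)$ need not vanish near $x$. After choosing $x_k\to x$, one has $B_{4r}(x_k)\subset B_{5r}(x)$ for large $k$, so the right-hand side must be taken over the slightly larger ball.

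Second, the last step needs neither concentration-compactness nor a Wiener-criterion argument. Fix a subsequence with $|\nabla e_0(u_k)|^2\,dx\rightharpoonup^{*}\mu$ as measures on $\bar D$, before taking any $\limsup$. Upper semicontinuity on closed sets, together with strong $L^2$ convergence on the left, then gives
\[
\frac{1}{|B_r|}\int_{B_r(x)}|v|^2\dx\le C\,\mu(\bar B_{5r}(x))\longrightarrow C\,\mu(\{x\})\qquad (r\to 0).
\]
The limit vanishes except at the at most countably many atoms of $\mu$, which form a set of zero capacity in $\mathbb R^2$. Since $L^2$-means control $L^1$-means and quasi-every point is a Lebesgue point of $v$, this yields $\tilde v=0$ quasi-everywhere on $\bar D\setminus\Omega$, as you intended.
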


\vspace{2mm} 
The above result will be crucial in order to pass to the limit
in the formula for the shape derivative. Apart from the convergence
of the state and adjoint state we will also require the
convergence of the indicator functions $\chi_{\Omega^k}$ to $\chi_{\Omega}$. This will be a consequence of the following
result:

\begin{thm} \label{cd} (Chambolle \& Doveri, \cite{ChamDov97}). Let $d=2$ and $(\Omega^k)_{k \in \mathbb N_0}$ a sequence of open sets such that $\rho_H^c(\Omega^k,\Omega) \rightarrow 0, k \rightarrow \infty$ for some open set $\Omega$. Assume that the number of connected components of $\partial \Omega^k$ is uniformly bounded and that  $\sup_{k \in \mathbb N_0} \mathcal H^1(\partial \Omega^k)<\infty$. For $f \in L^2(D)$ denote by $u_\Omega \in H^1(\Omega)$ the weak solution of the Neumann problem 
\begin{displaymath}
    -\Delta u_\Omega + u_\Omega = f \mbox{ in } \Omega, \quad  \frac{\partial u_\Omega}{\partial \nu} =0 \mbox{ on } 
    \partial \Omega
\end{displaymath}
and by $\bar u_\Omega, \bar \nabla u_\Omega$ the trivial extensions of $u_\Omega, \nabla u_\Omega$
to $D$. Then $(\bar u_{\Omega^k},\bar \nabla u_{\Omega^k})  \rightarrow (\bar u_{\Omega}, \bar \nabla u_{\Omega})$ in $L^2(D)^3$.
\end{thm}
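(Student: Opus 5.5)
The plan is to argue by compactness and identification of the limit, with the Mosco-type property of the Neumann spaces handled through two-dimensional duality. Write $u_k:=u_{\Omega^k}$. Testing the Neumann problem on $\Omega^k$ with $u_k$ gives $\Vert u_k\Vert_{H^1(\Omega^k)}\le \Vert f\Vert_{L^2}$. Hence $(\bar u_{\Omega^k},\bar\nabla u_{\Omega^k})$ is bounded in $L^2(D)^3$, and a subsequence converges weakly to some $(u^\ast,\Psi)$.

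\textbf{Step 1: measure convergence.} First show $\chi_{\Omega^k}\to\chi_\Omega$ in $L^1(D)$.
\begin{itemize}
\item Pass to a subsequence with $\partial\Omega^k\to K$ in the Hausdorff sense.
\item By Go\l{}\k{a}b's theorem, the bounded number of components and $\sup_k\mathcal H^1(\partial\Omega^k)<\infty$ give $\mathcal H^1(K)<\infty$. In particular $|K|=0$.
\item Hausdorff complementary convergence implies that every compact $C\subset\Omega$ lies in $\Omega^k$ for large $k$.
\item Every point of $D\setminus(\bar\Omega\cup K)$ is eventually outside $\bar\Omega^k$.
\end{itemize}
Together with $|\partial\Omega|\le|K|=0$, this gives the $L^1$ convergence. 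Consequently $u^\ast=\Psi=0$ a.e. outside $\Omega$. On each compact subset of $\Omega$ we get $u^\ast\in H^1_{loc}$ with $\Psi=\nabla u^\ast$, so $u^\ast\in H^1(\Omega)$ and $\Psi=\bar\nabla u^\ast$.

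\textbf{Step 2: identify the limit.} It remains to show $u^\ast=u_\Omega$ and that the convergence is strong. Both follow from convergence of the energies
\[
E_k=\min_{w\in H^1(\Omega^k)}\int_{\Omega^k}\bigl(|\nabla w|^2+w^2-2fw\bigr)\dx .
\]
Lower semicontinuity, $\liminf E_k\ge E(u^\ast)\ge E_\Omega$, is immediate from Step 1 and weak convergence. The upper bound needs a recovery sequence: for each $w\in H^1(\Omega)$ we want $w_k\in H^1(\Omega^k)$ with $(\bar w_k,\bar\nabla w_k)\to(\bar w,\bar\nabla w)$ strongly. This is the main obstacle, because $\Omega$ may have cracks that $\Omega^k$ approaches from both sides, so simple restriction or extension fails.

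\textbf{Step 3: recovery via duality.} To get the recovery sequence, I would use two-dimensional duality. On a simply connected region, the $L^2$-orthogonal complement of $\{\nabla w: w\in H^1(\Omega^k)\}$ in $L^2(\Omega^k)^2$ consists of the divergence-free fields with zero normal flux. These are exactly the fields $\nabla^\perp\psi$, where $\psi\in H^1$ is locally constant on each connected component of $\partial\Omega^k$. The bound on the number of boundary components keeps the number of free constants uniformly bounded, which is exactly the topological hypothesis under which \v{S}ver\'ak-type arguments (Theorem \ref{sv}) give Mosco convergence of these ``Dirichlet up to constants'' spaces. For multiply connected $\Omega^k$, the finitely many harmonic fields are absorbed into these constants.
\begin{itemize}
\item The weak-limit half of Mosco convergence for the $\psi$-spaces gives, by orthogonality, the strong-approximation half for the gradient spaces.
\item Concretely, project $(\bar w,\bar\nabla w)$ onto the closed subspace $\{(\bar v,\bar\nabla v): v\in H^1(\Omega^k)\}$ of $L^2(D)^3$ and show that the distance tends to zero.
\end{itemize}

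\textbf{Step 4: conclusion.} With the recovery sequence, $\limsup E_k\le E_\Omega$. Hence $E(u^\ast)=E_\Omega$, and uniqueness of the minimiser gives $u^\ast=u_\Omega$. Energy convergence also gives convergence of the $L^2(D)^3$-norms of $(\bar u_{\Omega^k},\bar\nabla u_{\Omega^k})$, so weak convergence upgrades to strong convergence. Uniqueness of the limit then yields convergence of the whole sequence.
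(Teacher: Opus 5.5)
The paper does not prove this statement; it quotes it from \cite{ChamDov97}. Your proposal therefore has to stand on its own. Steps 1, 2 and 4 are sound. Golab's theorem together with $\partial\Omega\subset K$ gives $\chi_{\Omega^k}\to\chi_\Omega$. The weak limit has the form $(\bar u^\ast,\bar\nabla u^\ast)$ with $u^\ast\in H^1(\Omega)$. Energy convergence upgrades weak to strong convergence.

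\textbf{Main gap: Step 3 approximates only gradients.} Everything rests on the recovery sequence, and Step 3 does not produce the one Step 4 uses. Your orthogonality argument concerns the gradient space $\nabla H^1(\Omega^k)\subset L^2(\Omega^k)^2$, whose complement is $\{\nabla^\perp\psi\}$. At best it yields $v_k$ with $\bar\nabla v_k\to\bar\nabla w$. Because of the zero-order term, you need approximation in the graph space $\{(\bar v,\bar\nabla v)\}\subset L^2(D)^3$. Its orthogonal complement consists, on $\Omega^k$, of pairs $(\Div\sigma,\sigma)$ with $\Div\sigma\in L^2$ and $\sigma\cdot\nu=0$ weakly. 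These are not stream-function fields, so ``by orthogonality'' says nothing about that distance.

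Gradient approximation is genuinely blind to the missing information. Let $\Omega=B_1\cup B_2$ be two disjoint discs, and let $\Omega^k$ be the dumbbell obtained by joining them with a channel of width $1/k$. This has one boundary component, bounded length, and $\rho_H^c(\Omega^k,\Omega)\to0$. For $w=\chi_{B_2}\in H^1(\Omega)$, the choice $v_k\equiv 0$ already gives $\bar\nabla v_k=\bar\nabla w$. A genuine recovery sequence, by contrast, must pass from $0$ to $1$ through the channel with vanishing Dirichlet energy. Being able to do this between components of $\Omega$ that are glued together inside $\Omega^k$ is exactly what you need to show that the limit satisfies $\int_{\Omega_j}(u^\ast-f)\,dx=0$ on every component $\Omega_j$ of $\Omega$. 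Your argument never addresses it.

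You need an additional step. One option: reduce to $w$ supported in a single component, and fix the additive constant of the gradient recovery on a compact connected subset. Then prove, by a two-dimensional capacity (extremal length) argument, that different components of $\Omega$ can be separated inside $\Omega^k$ by cut-off functions of vanishing energy. Another option: obtain strong $L^2$-compactness of $\bar u_{\Omega^k}$ (e.g.\ by SBV compactness, using the length bound), and then still prove the componentwise compatibility condition.

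\textbf{Secondary gap: the stream-function spaces.} The Mosco convergence of the stream-function spaces is asserted, not derived. Theorem \ref{sv} concerns $H^1_0(\Omega^k)$ only. You need it for the spaces of $\psi\in H^1_0(D)$ that are constant q.e.\ on each component of $\bar D\setminus\Omega^k$. For these you must control the free constants, which can blow up on complement components that shrink to points. You must also show that the constants attached to components whose Hausdorff limits touch coincide in the limit. This is the real core of a duality proof, not a routine consequence of {\v{S}}ver{\'a}k's theorem.
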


\vspace{2mm}
\noindent
Let us next formulate the main result of this section:

\begin{thm}\label{conv2}
Let $(\Omega^k)_{k \in \mathbb N_0}$ with $\Omega^k=\Phi^k(\Omref)$ be the sequence generated by Algorithm \ref{alg:Steepest} and suppose
that $(\Phi^k)_{k \in \mathbb N_0}$ is bounded in $W^{1,\infty}(D,\mathbb R^2)$. Then, there exists a subsequence $(\Omega^{k_j})_{j \in \mathbb N}$ which converges to some shape $\Omega \in \mathcal S$ (possibly $\Omega=\emptyset)$ with respect to the Hausdorff complementary metric,  
and $\mathcal J'(\Omega)[V] = 0$ for each $V \in W^{1,\infty}_0(D,\mathbb R^2)$, i.e. $\Omega$ is a stationary shape (where we formally set $\mathcal J'(\emptyset)=0$).
\end{thm}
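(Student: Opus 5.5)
Since $(\Phi^k)$ is bounded in $W^{1,\infty}(D,\mathbb R^2)$ and $\Phi^k=\id$ on $\partial D$, Arzel\`a--Ascoli gives a subsequence with $\Phi^{k_j}\to\Phi$ uniformly on $\bar D$ and $\Phi$ Lipschitz with constant $M$. By compactness of the Hausdorff complementary topology on open subsets of $D$ (see \cite{HenMic06}), a further subsequence satisfies $\rho_H^c(\Omega^{k_j},\Omega)\to 0$ for some open $\Omega\subset D$, possibly empty. The topological hypotheses of Theorems \ref{sv} and \ref{cd} then come from the representation $\Omega^k=\Phi^k(\Omref)$ with $\Phi^k$ a homeomorphism of $\bar D$. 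The number of components of $\bar D\setminus\Omega^k=\Phi^k(\bar D\setminus\Omref)$ and of $\partial\Omega^k=\Phi^k(\partial\Omref)$ equals that for $\Omref$, and this is finite because $\Omref$ is Lipschitz. Moreover $\mathcal H^1(\partial\Omega^k)\le M\,\mathcal H^1(\partial\Omref)$. Only the upper Lipschitz bound is used here, which is why the lower bound in \eqref{eq:assump} is not needed. Theorem \ref{sv} yields $\gamma$-convergence, and hence Mosco convergence by Theorem \ref{gM}. Theorem \ref{cd}, applied for instance with $f=1$, should give $\chi_{\Omega^{k_j}}\to\chi_\Omega$ in $L^2(D)$, since the Neumann solution is then identically $1$ on the domain.

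Next I pass to the limit in the state and adjoint equations. By Lemma \ref{lem:apriori}, $u_k$ and $p_k$ are bounded in $H^1_0(D)$ and $u_k$ is bounded in $L^\infty$. So along a subsequence $u_k\rightharpoonup \tilde u$ and $p_k\rightharpoonup\tilde p$ weakly in $H^1_0(D)$ and strongly in $L^r$; by the second Mosco condition $\tilde u,\tilde p\in H^1_0(\Omega)$. For $\eta\in H^1_0(\Omega)$, the first Mosco condition gives $\eta_k\in H^1_0(\Omega^k)$ with $\eta_k\to\eta$. Testing \eqref{state} with $\eta_k$ and passing to the limit, using the $L^\infty$ bound and continuity of $g$, shows that $\tilde u$ solves \eqref{state} on $\Omega$, so $\tilde u=u$. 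To get strong $H^1$ convergence, I take $w_k\in H^1_0(\Omega^k)$ with $w_k\to u$ strongly and test the difference of the equations with $u_k-w_k$; monotonicity of $g$ then gives $\|\nabla(u_k-w_k)\|_{L^2}\to 0$. With $\nabla u_k\to\nabla u$ strongly, the right-hand side of \eqref{adj} converges by the growth bounds \eqref{j1est}--\eqref{j2est} and a generalized dominated convergence (Vitali) argument. The same argument then gives $p_k\to p$ strongly in $H^1_0(D)$.

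Then I pass to the limit in $\mathcal J'$. For fixed $V$, every term in \eqref{firstvar} is an integral over $D$ whose integrand vanishes outside the domain, either through a factor $p$ or $\nabla p$ or through $\chi_{\Omega^k}$ in the terms $j\Div V$, $j_x\cdot V$ and $j_z\cdot DV^{\mathsf T}\nabla u$. Strong convergence of $u_k,\nabla u_k,p_k,\nabla p_k$ together with $\chi_{\Omega^k}\to\chi_\Omega$ and the growth conditions \eqref{jest}--\eqref{j2est} (with Vitali) gives $\mathcal J'(\Omega^{k_j})[V]\to\mathcal J'(\Omega)[V]$. Here one has to check that $j(\cdot,0,0)\chi_{\Omega^k}$ is handled correctly, since this is exactly where the indicator convergence enters. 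Finally, $|\mathcal J'(\Omega^{k_j})[V]|\le\|\mathcal J'(\Omega^{k_j})\|\,\|DV\|_{L^\infty}\to0$ by Theorem \ref{conv1}, so $\mathcal J'(\Omega)[V]=0$. If $\Omega=\emptyset$ the claim holds by the stated convention; in that case everything tends to zero anyway.

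The main obstacle I expect is the convergence $\chi_{\Omega^k}\to\chi_\Omega$ in $L^1$. Hausdorff complementary convergence alone only gives $\limsup|\Omega^k|$-type inequalities, and $\partial\Omega$ might have positive measure. I would derive it from Theorem \ref{cd} with $f=1$, as described above, or alternatively from the uniform perimeter bound together with BV compactness, identifying the limit via Hausdorff convergence. A secondary technical point is the strong convergence of the semilinear state, handled by the Mosco recovery-sequence argument above.
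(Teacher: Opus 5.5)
Your proof is correct and has the same overall architecture as the paper's. Both use \v{S}ver\'ak's theorem for $\gamma$/Mosco convergence, Chambolle--Doveri with $f\equiv 1$ for $\chi_{\Omega^k}\to\chi_\Omega$, the a priori bounds of Lemma \ref{lem:apriori}, generalised dominated convergence for the $j$-terms, and Theorem \ref{conv1} at the end. Two sub-steps are done differently.

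\textbf{The Hausdorff complementary limit.} You obtain it from abstract compactness of $\rho_H^c$ on open subsets of $D$. The paper instead identifies it explicitly as $\Omega=D\setminus\Phi(\bar D\setminus\Omref)$ and proves $\rho_H^c(\Omega^k,\Omega)\le\Vert\Phi^k-\Phi\Vert_{C^0}$. This gives a concrete description of the limit shape and a convergence rate. In your version the Arzel\`a--Ascoli step is never actually used; only the Lipschitz bound $M$ enters, through $\mathcal H^1(\partial\Omega^k)\le M\,\mathcal H^1(\partial\Omref)$.

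\textbf{Passing to the limit in the state and adjoint equations.} You test with Mosco recovery sequences and then run an energy argument. The paper freezes the nonlinearity ($f-g(v)$, respectively $h-g'(u)w$), applies $\gamma$-convergence to the resulting linear problems, and controls $u_k-\tilde u_k$ and $p_k-\tilde p_k$ directly. By Theorem \ref{gM} these are two sides of the same coin; the paper's version gives identification of the limit and strong convergence in one stroke.

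\textbf{A small point in your energy step.} You do not need monotonicity of $g$. Subtracting a term involving $g(w_k)$ would actually be awkward, since $w_k$ is not bounded in $L^\infty$ and $g$ may grow arbitrarily fast. Simply write
\[
\Vert\nabla(u_k-w_k)\Vert_{L^2}^2=\int_{\Omega^k}(f-g(u_k))(u_k-w_k)\dx-\int_D\nabla w_k\cdot\nabla(u_k-w_k)\dx .
\]
The first term tends to zero by the $L^\infty$ bound on $u_k$ and $u_k-w_k\to 0$ in $L^2(D)$. The second tends to zero as a product of strongly and weakly convergent sequences.

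Your remark that $\chi_{\Omega^k}$ is genuinely needed only for the $j\Div V$ and $j_x\cdot V$ terms in $C_k$ is a useful clarification. The reason is that $j(\cdot,0,0)$ and $j_x(\cdot,0,0)$ need not vanish outside $\Omega^k$, and the paper leaves this implicit.
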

\begin{proof} Our assumption implies that there exists $M \geq 0$ such that $\Vert \Phi^k \Vert_{W^{1,\infty}(D,\mathbb R^2)} \leq M$ and hence in view of the convexity of $D$
\begin{equation} \label{eq:lipbound}
| \Phi^k(x_1)-\Phi^k(x_2) | \leq M | x_1-x_2| \qquad \mbox{for all } x_1,x_2 \in \bar D \mbox{ and all } k \in \mathbb N_0.
\end{equation}
By the theorem of Arzel\`a-Ascoli there exist a subsequence, again denoted by $(\Phi^k)_{k\in\mathbb{N}_0}$, and $\Phi \in C^0(\bar D,\mathbb{R}^2)$ such that $\Phi^k \rightarrow \Phi$ uniformly.
Let us define $\Omega:= D \setminus \Phi(\bar D \setminus \Omref)$. Clearly $\Omega$ is open and
$\Omega \subset D$ so that $\Omega \in \mathcal S$. Let us assume for the remainder of the proof that $\Omega \neq \emptyset$. We claim that
\begin{equation} \label{eq:hconv}
\rho_H^c(\Omega^k,\Omega) \leq \Vert \Phi^k - \Phi \Vert_{C^0(\bar D;\mathbb R^d)}.
\end{equation}
To see this, let $x \in \bar D$ and choose $y \in \bar D \setminus \Omega$ such that
$d_{\complement \Omega}(x) = | x-y|$. Since $\Phi^k=\id$ on $\partial D$ for all $k \in \mathbb N_0$, we have $\Phi=\id$ on $\partial D$ 
so that  $\partial D \subset \Phi(\bar D \setminus \Omref)$ and therefore
$\bar D \setminus \Omega = \Phi(\bar D \setminus \Omref)$. Thus 
there exists $z \in \bar D \setminus \Omref$ such that $\Phi(z)=y$. As $\Phi^k$ is bijective and $\Omega^k=
\Phi^k(\Omref)$ we have that  $y_k:= \Phi^k(z) \in \bar D\setminus \Omega^k$ and hence
\begin{displaymath}
d_{\complement \Omega^k}(x)  - d_{\complement \Omega}(x) \leq | x - y_k | - | x- y| \leq | y_k - y |
= | \Phi^k(z) - \Phi(z) | \leq \Vert \Phi^k - \Phi \Vert_{C^0(\bar D;\mathbb R^d)}.
\end{displaymath}
By exchanging the roles of $\Omega^k$ and $\Omega$ we deduce \eqref{eq:hconv}. 
As a result, the sequence $(\Omega^k)_{k \in \mathbb N_0}$ converges to $\Omega$ with respect to
the Hausdorff complementary metric. By Lemma \ref{lem:propalg},  $\Phi^k: \bar D \rightarrow \bar D$ is a homeomorphism and $\bar D \setminus \Omega^k= \bar D \setminus \Phi^k(\Omref)= \Phi^k(\bar D \setminus \Omref)$ and hence we infer that $\sharp (\Omega^k)^c= \sharp \Omref^c$ 
is uniformly bounded. Thus, Theorem \ref{sv} implies that
$(\Omega^k)_{k \in \mathbb N_0}$ $\gamma$-- converges to $\Omega$.
In view of Lemma \ref{lem:apriori} we have that
\begin{equation} \label{bounds}
\Vert e_0(u_k) \Vert_{H^1} + \Vert e_0(u_k) \Vert_{L^\infty} \leq c^*, \quad k \in \mathbb N,
\end{equation}
so that there exist a subsequence, again not relabeled, and $v \in H^1_0(D) \cap L^\infty(D)$ such that
\begin{equation}\label{ucon}
e_0(u_k)  \rightharpoonup v \text{ in } H^1_0(D), \quad e_0(u_k)  \rightarrow v \text{ in } L^2(D), \quad
e_0(u_k) \rightarrow v \text{ a.e. in } D.
\end{equation}
In view of Theorem \ref{gM} we have that $(H^1_0(\Omega^k))_{k \in \mathbb N_0}$ Mosco--converges to $H^1_0(\Omega)$
which  implies that  $v \in H^1_0(\Omega)$. Our next aim is to prove that  $e_0(u_k) \rightarrow v$  in $H^1_0(D)$, and that $v$ solves the state equation \eqref{state} in $\Omega$. To begin, denote by $\tilde u_k \in H^1_0(\Omega^k)$ the unique solution of the linear equation 
\[
\int_{\Omega^k} \nabla \tilde u_k \cdot \nabla \eta \dx = \int_{\Omega^k} (f-g(v))\eta \dx \quad \forall \, \eta \in H^1_0(\Omega^k).
\]
Then, the $\gamma-$convergence implies that $e_0(\tilde u_k) \rightarrow e_0(u)$, 
in $H^1_0(D)$, where $u \in H^1_0(\Omega)$ solves
\begin{equation}\label{uprob}
\int_{\Omega} \nabla u \cdot \nabla \eta \dx = \int_{\Omega} (f-g(v))\eta \dx \quad \forall \, \eta \in H^1_0(\Omega).
\end{equation}
The difference $\tilde u_k-u_k$ satisfies
\[
\int_{\Omega^k} \nabla (\tilde u_k-u_k)\cdot \nabla \eta \dx = \int_{\Omega^k} (g(u_k)-g(v))\eta \dx \quad \forall \, \eta \in H^1_0(\Omega^k).
\]
Plugging  $\eta := \tilde u_k-u_k$ into the above relation 
and using the uniform boundedness of the sequence 
$(u_k)_{k \in \mathbb N_0}$ we obtain
\[
\Vert \tilde u_k-u_k\Vert_{H^1}^2 \le c \Vert u_k-v\Vert_{L^2}\Vert\tilde u_k-u_k\Vert_{L^2},
\]
which implies $\Vert \tilde u_k-u_k\Vert_{H^1} \rightarrow 0$. Thus, after possibly passing
to a further subsequence,
\begin{equation} \label{eq:limit1}
 e_0(u_k) \rightarrow e_0(u) \qquad \mbox{ in }  H^1_0(D) \mbox{ and a.e. in } D,
\end{equation}
so that by \eqref{ucon} $e_0(u)=v$ and hence it follows from \eqref{uprob} that $u$ solves the state equation \eqref{state}
in $\Omega$. 
Next we claim that 
\begin{eqnarray}
j_u(\cdot,e_0(u_k),\nabla e_0(u_k)) &\rightarrow & j_u(\cdot,e_0(u), \nabla e_0(u)) \quad \mbox{ in } L^{\frac{q}{q-1}}(D), \label{convph1} \\
j_z(\cdot,e_0(u_k),\nabla e_0(u_k)) & \rightarrow & j_z(\cdot,e_0(u), \nabla e_0(u)) \quad \mbox{ in } L^2(D,\mathbb R^d).\label{convph2}
\end{eqnarray}
In order to show \eqref{convph2} we set $f_k:=| j_z(\cdot,e_0(u_k),\nabla e_0(u_k)) - j_z(\cdot,e_0(u), \nabla e_0(u)) |^2$. Clearly, $f_k \rightarrow 0$ a.e.~in $D$, while
\eqref{j2est} implies that 
\begin{displaymath}
f_k \leq c \bigl( \varphi_3^2 + | e_0(u_k) |^q + | \nabla e_0(u_k) |^2 + | e_0(u) |^q + | \nabla e_0(u) |^2 \bigr)=: r_k.
\end{displaymath}
We have that $r_k \rightarrow r:=c \bigl( \varphi_3^2 + 2 | e_0(u) |^q + 2| \nabla e_0(u) |^2 \bigr)$ a.e.~in $D$ as well as $\int_D r_k \dx \rightarrow \int_D r \dx$ as $k
\rightarrow \infty$, so that the generalised Lebesgue dominated convergence theorem yields \eqref{convph2}.
The relation \eqref{convph1} is proved in the same way. \\
Let us next consider the sequence of adjoint solutions $(p_k)_{k \in \mathbb N_0}$. Similar as
above we may assume that there exists a further subsequence 
and $w \in H^1_0(D)$ such that
\begin{equation}\label{wcon}
e_0(p_k)  \rightharpoonup w \text{ in } H^1_0(D), \quad e_0(p_k)  \rightarrow w \text{ in } L^2(D), \quad 
e_0(p_k) \rightarrow w \text{ a.e. in } D.
\end{equation}
As above our  aim is to prove that $e_0(p_k) \rightarrow w$ strongly in $H^1_0(D)$ and that $w$ solves the adjoint equation \eqref{adj} in $\Omega$. Let
$h \in H^{-1}(D)$ be defined by
\begin{displaymath}
\langle h,\eta \rangle:= \int_{D} \bigl( j_u(\cdot,e_0(u),\nabla e_0(u)) \eta  + j_z(\cdot,e_0(u),\nabla e_0(u)) \cdot \nabla \eta \bigr)  \dx
\end{displaymath}
and denote by $\tilde p_k \in H^1_0(\Omega^k)$ the solution of
\begin{displaymath}
\int_{\Omega^k} \nabla \tilde p_k \cdot \nabla \eta  \dx  = \langle h,\eta \rangle
-  \int_{\Omega^k} g'(e_0(u)) w \eta \dx \qquad \forall \eta \in H^1_0(\Omega^k).
\end{displaymath}
Using that $(\Omega^k)_{k \in \mathbb N_0}$ $\gamma$--converges to $\Omega$ we deduce that $e_0(\tilde p_k) \rightarrow e_0(p)$ in $H^1_0(D)$ as $k \rightarrow \infty$,
where $p \in H^1_0(\Omega)$ is the unique solution of 
\begin{displaymath}
\int_\Omega \nabla p \cdot \nabla \eta  \dx = \langle h, \eta \rangle 
-  \int_{\Omega} g'(e_0(u)) w \eta \dx\qquad\forall \eta \in H^1_0(\Omega).
\end{displaymath}
On the other hand we have that
\begin{eqnarray*}
\lefteqn{ 
\int_{\Omega^k} \nabla (p_k - \tilde p_k) \cdot \nabla \eta  \dx =
\int_{\Omega^k} \bigl( g'(e_0(u)) w - g'(u_k) p_k \bigr) \eta \dx }  \\
& & +   \int_{\Omega^k} \bigl( j_u(\cdot,e_0(u_k),\nabla e_0(u_k))-
j_u(\cdot,e_0(u),\nabla e_0(u)) \bigr) \, \eta  \dx  \\
& & + \int_{\Omega^k} \bigl( j_z(\cdot,e_0(u_k),\nabla e_0(u_k)) - j_z(\cdot,e_0(u), \nabla e_0(u)) \bigr) \cdot \nabla \eta  \dx \qquad\forall \eta \in H^1_0(\Omega^k).
\end{eqnarray*} 
Testing the above relation with $\eta=p_k - \tilde p_k$ and
recalling \eqref{eq:limit1}, \eqref{convph1}, \eqref{convph2} and \eqref{wcon} we infer that 
$e_0(p_k) - e_0(\tilde p_k) \rightarrow 0$ in $H^1_0(D)$ so that in conclusion 
\begin{equation} \label{eq:limit2}
e_0(p_k) \rightarrow e_0(p) \qquad \mbox{ in }  H^1_0(D), 
\end{equation}
and thus $w=e_0(p)$, i.e. $p$ solves the adjoint equation \eqref{adj}.
Next we note that $\partial \Omega^k = \Phi^k(\partial \Omref)$, so that the number of connected components of $\partial \Omega^k$
coincides with the corresponding number of $\partial \Omref$ and
is therefore uniformly bounded. Furthermore, \eqref{eq:lipbound} and the fact that $\Omref$ has a Lipschitz boundary imply that
\begin{displaymath}
\sup_{k \in \mathbb N_0} \mathcal H^1(\partial \Omega^k) = \sup_{k \in \mathbb N_0}  \mathcal H^1(\Phi^k(\partial \Omref)) \leq M  \mathcal H^1(\partial \Omref) < \infty.
\end{displaymath}
Applying Theorem \ref{cd} with $f \equiv 1$ and noting that the solution of the corresponding Neumann problem on $\Omega$ is given by the indicator function $\chi_\Omega$ we infer that 
\begin{equation} \label{indicator}
\chi_{\Omega^k} \rightarrow \chi_{\Omega} \mbox{ in } L^2(D) \mbox{ and a.e. in  } D
\end{equation}
after possibly passing to a further subsequence. 
We have for $V \in W^{1,\infty}_0(D,\mathbb R^2)$
\begin{eqnarray}
\mathcal J'(\Omega^k)[V]  &= &  \int_{\Omega^k} 
\bigl( DV + D V^{\mathsf{T}} - \Div V I \bigr)  \nabla u_k \cdot \nabla p_k   \dx + 
\int_{\Omega^k} \Div( f V) p_k  - g(u_k)p_k \Div V \dx \nonumber \\
&  & +  \int_{\Omega^k} \Bigl( j(\cdot,u_k,\nabla u_k) \Div V +  j_x(\cdot,u_k,\nabla u_k) \cdot V - j_z(\cdot,u_k,\nabla u_k) \cdot DV^{\mathsf{T}} \nabla u_k \Bigr)  \dx  \nonumber  \\
& =: & A_k+B_k + C_k. \label{akbk}
\end{eqnarray} 
Abbreviating $A:=\int_{\Omega}  
\bigl( DV + D V^{\mathsf{T}} - \Div V I \bigr)  \nabla u \cdot \nabla p   \dx $ we have
\begin{eqnarray*}
\lefteqn{
| A_k - A |  = | \int_D \bigl( DV + D V^{\mathsf{T}} - \Div V I \bigr) \bigl( 
\chi_{\Omega^k} \nabla e_0(u_k) \cdot \nabla e_0(p_k) - \chi_{\Omega} \nabla e_0(u) \cdot \nabla e_0(p) 
\bigr) | }\\
& \leq & C \int_D \chi_{\Omega^k} | \nabla e_0(u_k) \cdot \nabla e_0(p_k) - \nabla e_0(u) \cdot \nabla e_0(p) |  \dx + C \int_D | \chi_{\Omega^k} - \chi_{\Omega} | \,  | \nabla e_0(u) \cdot \nabla e_0(p) |  \dx \\
& \leq & C \bigl( \Vert e_0(u_k) - e_0(u) \Vert_{H^1} \Vert e_0(p_k) \Vert_{H^1} +
\Vert e_0(u) \Vert_{H^1} \Vert e_0(p_k) - e_0(p) \Vert)_{H^1} \bigr)  +  C \int_D \zeta_k   \dx \\
& \leq & C \bigl( \Vert e_0(u_k) - e_0(u) \Vert_{H^1} +
 \Vert e_0(p_k) - e_0(p) \Vert)_{H^1} \bigr)  +  C \int_D \zeta_k   \dx,
\end{eqnarray*}
where  we have used Lemma \ref{lem:apriori} and have set $ \zeta_k:=| \chi_{\Omega^k} - \chi_{\Omega} | | \nabla e_0(u) \cdot \nabla e_0(p) |$. Clearly, $\zeta_k \leq 2 | \nabla e_0(u)| | \nabla e_0(p)| \in L^1(D)$ and $\zeta_k \rightarrow 0$ a.e. in $D$ in view of \eqref{indicator}. Thus Lebesgue's dominated convergence theorem together with  \eqref{eq:limit1} and \eqref{eq:limit2} implies that $\lim_{k \rightarrow \infty} A_k=A$, while a  similar argument
shows that $\lim_{k \rightarrow \infty} B_k=B:= \int_\Omega \Div (fV) p - g(u)p \Div V \dx$. Finally, in order to
examine the terms $C_k$ we note that similarly to \eqref{convph2} one can show that
\begin{displaymath}
j(\cdot,e_0(u_k),\nabla e_0(u_k))  \rightarrow j(\cdot,e_0(u), \nabla e_0(u)), \; 
j_x(\cdot,e_0(u_k),\nabla e_0(u_k))  \rightarrow  j_x(\cdot,e_0(u), \nabla e_0(u)) \; \mbox{ in } L^1(D),
\end{displaymath}
from which we infer together with \eqref{convph2} that 
\begin{displaymath}
\lim_{k \rightarrow \infty} C_k = C:= \int_{\Omega} \Bigl( j(\cdot,u,\nabla u) \Div V +  j_x(\cdot,u,\nabla u) \cdot V - j_z(\cdot,u,\nabla u) \cdot DV^{\mathsf{T}} \nabla u \Bigr)  \dx.
\end{displaymath}
Recalling \eqref{akbk} we deduce that $\mathcal J'(\Omega)[V]= \lim_{k \rightarrow \infty} \mathcal J'(\Omega^k)[V]  =0$ with the help of  Theorem \ref{conv1}.
\end{proof}

\section{Numerical experiment}
Finally, we present a numerical example that shows, on the one hand, that the conditions of Theorem \ref{conv2} can be verified numerically and are also fulfilled in our example, and on the other hand, that the iteration sequence of the steepest descent method in Algorithm \ref{alg:Steepest} can potentially converge to the empty set.
Here, we consider $g = \frac{1}{2}\exp$ and set the data $f(x) = 1 + g( u^*(x) )$, where $u^*(x) = \frac{4}{{3\pi}} - \frac{\|x\|^2}{4}$.
We also set $j(x,u,z) = \frac{1}{2}(u-u_d(x))^2$ with $u_d(x) = \frac{4}{\pi}-\|x\|^2$.
This setting is similar to that of \cite[Experiment 1]{DHH25-convergence} with a linear state equation, where the radial symmetric local minimizer $B_{\frac{4}{\sqrt{3\pi}}}(0)$ and $\emptyset$ of the shape functional are known. However, for the semilinear case considered here we are not aware of non-trivial exact (radially symmetric) solutions. But we expect that the solution behaviour will be similar to that in the linear case. 

The numerical experiments were conducted using DUNE \cite{duneReference}, particularly the python bindings \cite{DunePython1,DunePython2}.
The initial grids are constructed with pygmsh \cite{pygmsh}.

In our numerical experiment Algorithm \ref{alg:Steepest} is initialized with two different domains. As stopping criterion we use $\|J'(\Omega_k)\| \leq 10^{-3}$. If we initialize the steepest descent method with $\Omega^0 = (-0.75,0.75)^2$, the algorithm computes iterates $\Omega^k$ which seem to converge to the global minimizer $\emptyset$, compare Fig. \ref{fig:Meshes}, top row, whereas initialization with $\Omega^0 =(-1,1)^2$ generates a sequence $(\Omega^k)_{k\in\mathbb{N}_0}$ which seems to converge to a ball centered at zero close to the local minimizer $B_{\frac{4}{\sqrt{3\pi}}}(0)$ of the linear case, compare Fig. \ref{fig:Meshes}, bottom row.

To demonstrate that the norms $\|D\Phi_k\|_{L^\infty}$ stay bounded with increasing iteration counter $k$ we in the case $\Omega^0 = (-1,1)^2$ use a cascadic approach, where the triangulations of the respective domains are congruently refined after every 15 iterations of the algorithm. The result is displayed in Fig. \ref{fig:norms}, left for 4 refinement levels. It appears that the norm begins to stagnate as $k$ increases which clearly indicates that $\|D\Phi_k\|_{L^\infty}$ stays bounded as the refinement level increases. A similar behaviour is observed in the case $\Omega^0 = (-0.75,0.75)^2$ shown in Fig. \ref{fig:norms}, right, where the norms $\|D\Phi_k\|_{L^\infty}$ are depicted for refinement levels 1-4.

\begin{figure}
    \centering

    \includegraphics[width = .24\textwidth]{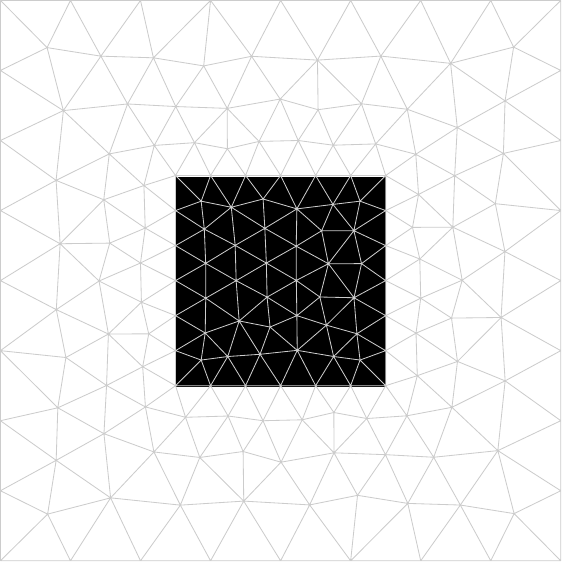}
    \includegraphics[width = .24\textwidth]{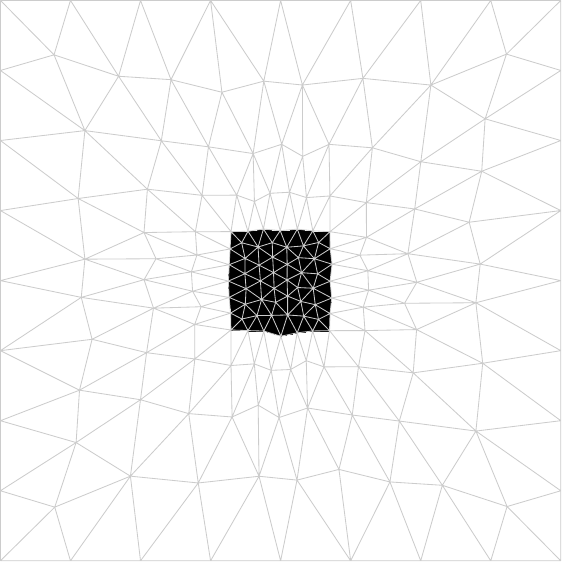}
    \includegraphics[width = .24\textwidth]{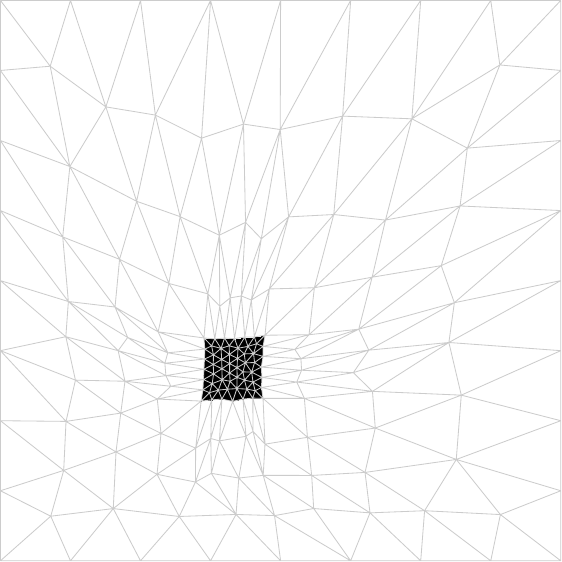}
    \includegraphics[width = .24\textwidth]{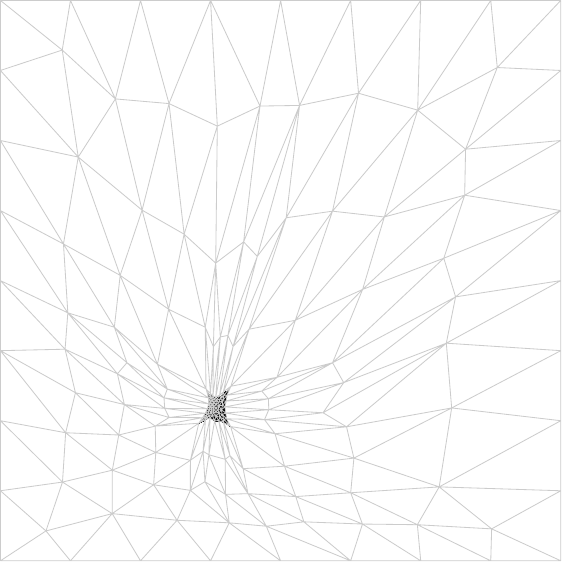}

    \includegraphics[width = .19\textwidth]{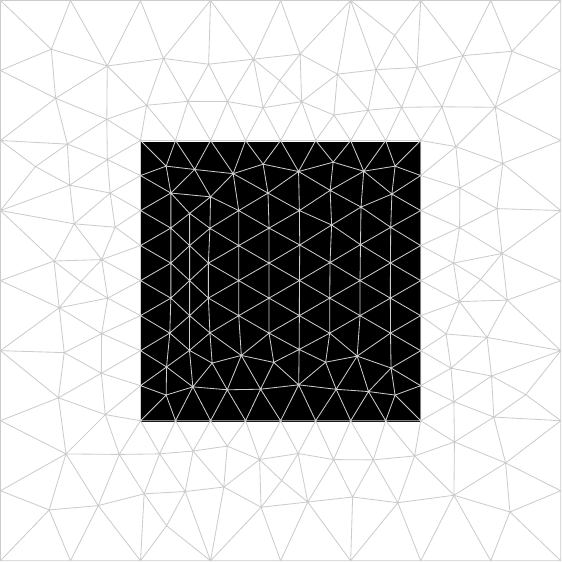}
    \includegraphics[width = .19\textwidth]{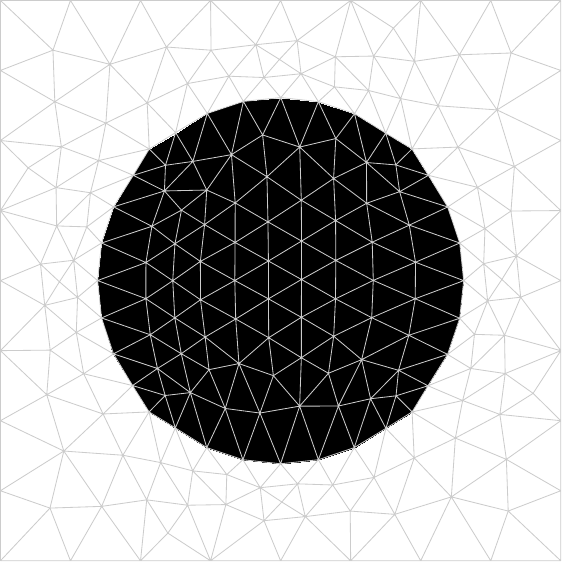}
    \includegraphics[width = .19\textwidth]{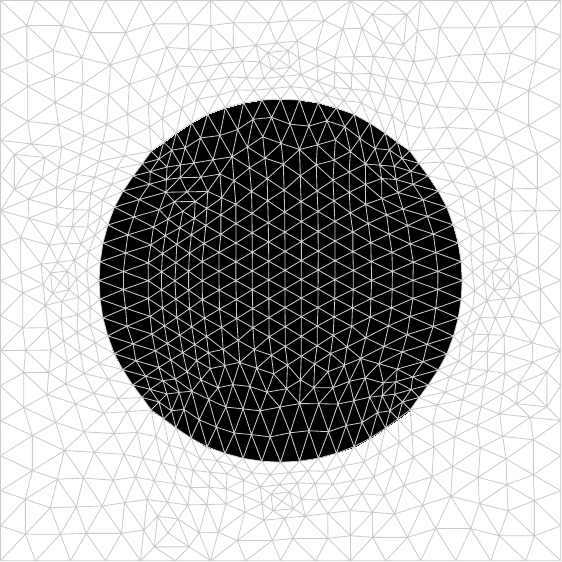}
    \includegraphics[width = .19\textwidth]{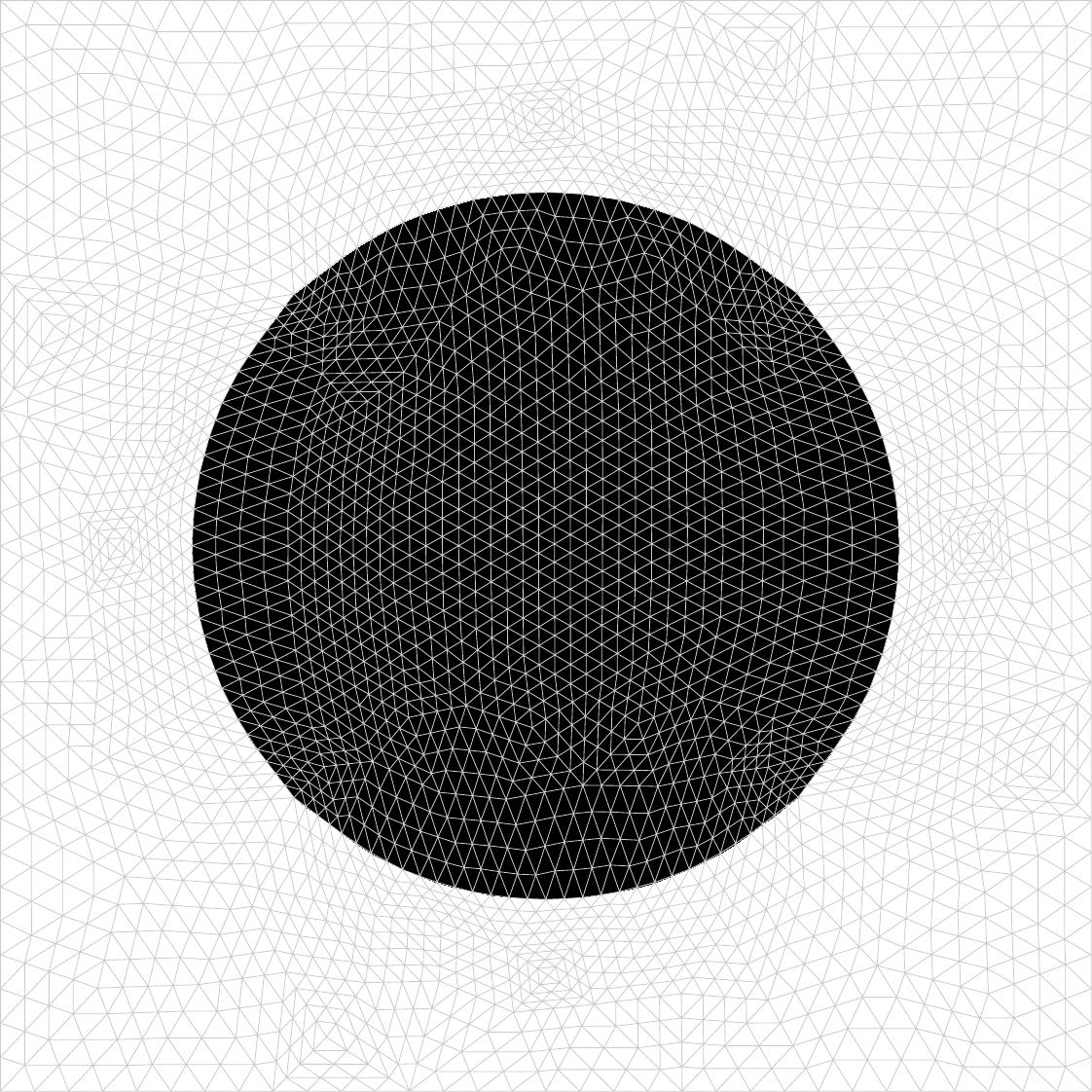}
    \includegraphics[width = .19\textwidth]{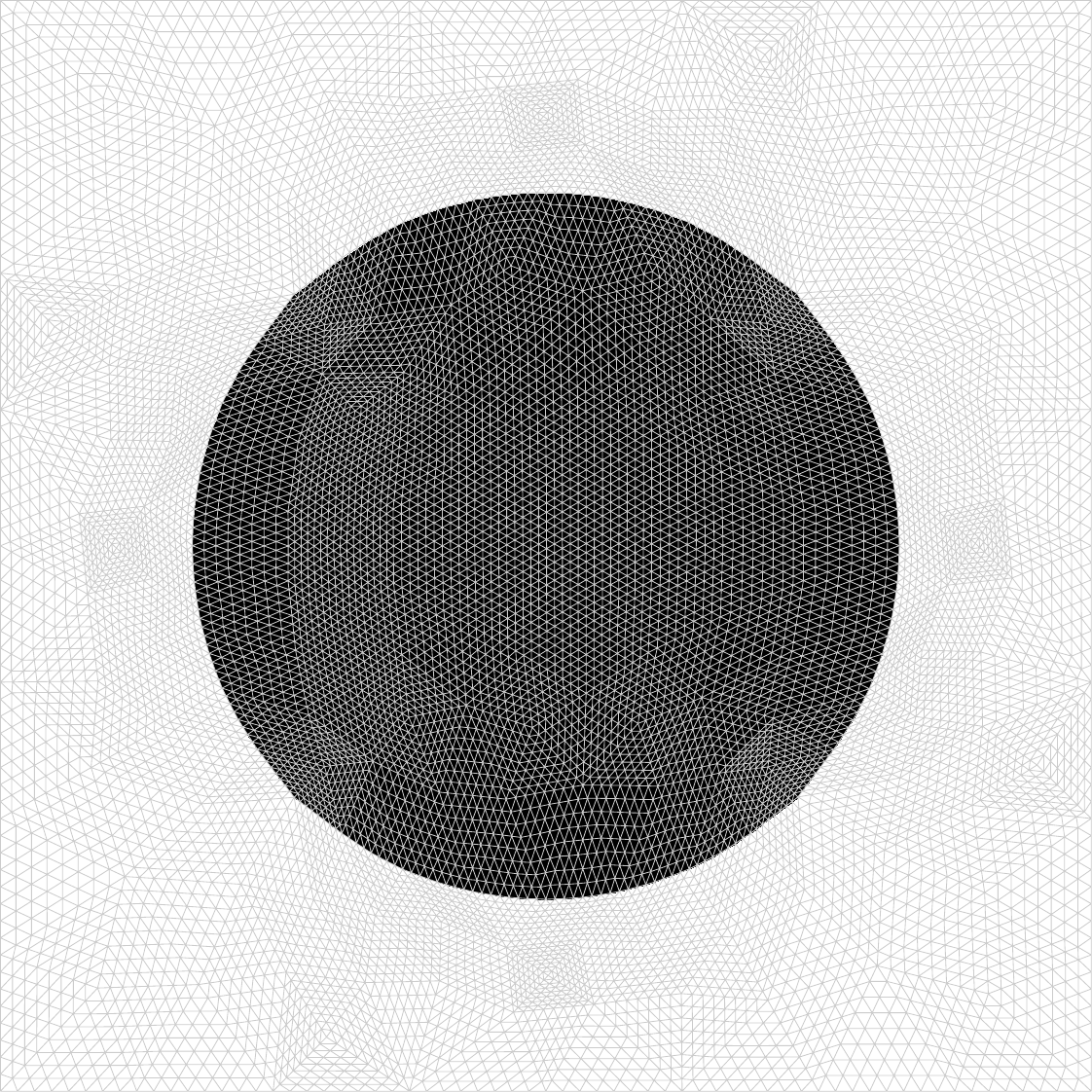}

\caption{Selected iterates of Algorithm \ref{alg:Steepest} when starting with $\Omega^0 = (-0.75,0.75)^2$ (top row), and with $\Omega^0 =(-1,1)^2$ (bottom row).  Black areas indicate $\Omega$. In the top row the domain together with the grid on refinement level 1 is shown. In the bottom row we display the domains together with the grids on refinement levels  1-4 (from left to right).}\label{fig:Meshes}
\end{figure}

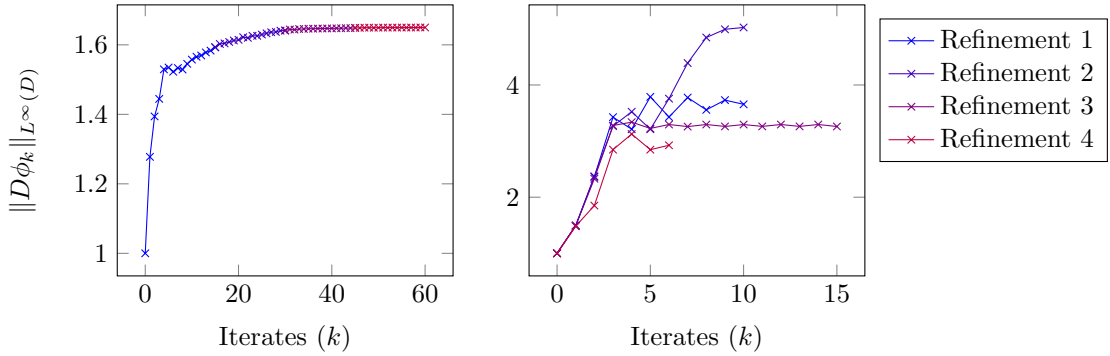
\begin{figure}
    \centering
    
    \begin{tikzpicture}
        \begin{groupplot}[group style={group size= 2 by 1},width=.4\linewidth, legend style={at={(2.95,0.95)}}]
            \nextgroupplot[xlabel = Iterates ($k$), ylabel = $\|D\phi_k\|_{L^\infty(D)}$]

            \addplot[mark=x, color = red!00!blue] table [x expr = \thisrow{iter} + 00, y=dPhi, col sep=comma, ] {expBall_0_semiLinearBall.csv};
            \addlegendentry{Refinement 1}
            \addplot[mark=x, color = red!25!blue] table [x expr = \thisrow{iter} + 15, y=dPhi, col sep=comma, ] {expBall_1_semiLinearBall.csv};
            \addlegendentry{Refinement 2}
            \addplot[mark=x, color = red!50!blue] table [x expr = \thisrow{iter} + 30, y=dPhi, col sep=comma, ] {expBall_2_semiLinearBall.csv};
            \addlegendentry{Refinement 3}
            \addplot[mark=x, color = red!75!blue] table [x expr = \thisrow{iter} + 45, y=dPhi, col sep=comma, ] {expBall_3_semiLinearBall.csv};
            \addlegendentry{Refinement 4}

            \nextgroupplot[xlabel = Iterates ($k$)]
            \addplot[mark=x, color = red!00!blue] table [x=iter, y=dPhi, col sep=comma, ] {expDegen_semiLinearDegen3.csv};
            \addplot[mark=x, color = red!25!blue] table [x=iter, y=dPhi, col sep=comma, ] {expDegen_semiLinearDegen4.csv};
            \addplot[mark=x, color = red!50!blue] table [x=iter, y=dPhi, col sep=comma, ] {expDegen_semiLinearDegen5.csv};
            \addplot[mark=x, color = red!75!blue] table [x=iter, y=dPhi, col sep=comma, ] {expDegen_semiLinearDegen6.csv};

        \end{groupplot}
    \end{tikzpicture}

    \caption{The evolution of $\|D\Phi_k\|_{L^\infty(D)}$ with the iteration counter $k$ for  $\Omega_0 = (-1,1)^2$ (non-degenerate case, left) and $\Omega_0 = (-0.75,0.75)^2$ (degenerate case, right).
    In the non-degenerate case, we see that $\|D\Phi_k\|_{L^\infty(D)}$ levels out.
    In the degenerate case, it is also seen that $\|D\Phi_k\|_{L^\infty(D)}$ levels out or the experiment stops due to the stopping criterion, or both.}
    \label{fig:norms}
\end{figure}

\section*{Acknowledgements}
The second and third author acknowledge funding of the project {\it 
Ein nichtglatter Phasenfeld Zugang für Formoptimierung mit instationären Fluiden} by the German Research foundation under project 423457678 within the Priority Programme 1962. The third author also acknowledges funding of the project {\it 
Fluiddynamische Formoptimierung mit Phasenfeldern und Lipschitz-Methoden} by the German Research foundation under project 543959359.

\vspace{\baselineskip}

\bibliographystyle{alpha}
\bibliography{bib}
\end{document}